\renewcommand{\AA}{\mathbb{A}}
\newcommand{\PP}{\mathbb{P}}
\newcommand{\QQ}{\mathbb{Q}}
\newcommand{\ZZ}{\mathbb{Z}}
\newcommand{\Div}{\operatorname{Div}}
\newcommand{\Gal}{\operatorname{Gal}}
\newcommand{\Gm}{\mathbb{G}_{\mathrm{m}}}
\newcommand{\Ecal}{\mathcal{E}}
\newcommand{\fp}{\mathfrak{p}}
\newcommand{\fq}{\mathfrak{q}}
\newcommand{\fP}{\mathfrak{P}}
\newcommand{\fQ}{\mathfrak{Q}}
\newcommand{\gp}{\mathfrak{p}}
\newcommand{\gq}{\mathfrak{q}}
\newcommand{\gP}{\mathfrak{P}}
\newcommand{\gQ}{\mathfrak{Q}}
\newcommand{\GL}{\operatorname{GL}}
\newcommand{\Kbar}{{\overline{K}}}
\newcommand{\MOD}[1]{~(\textup{mod}~#1)}
\newcommand{\Ocal}{\mathcal{O}}
\newcommand{\ptinf}{{O}}
\newcommand{\ord}{\operatorname{ord}}
\newcommand{\Spec}{\operatorname{Spec}}
\newcommand{\Supp}{\operatorname{Supp}}
\renewcommand{\div}{\operatorname{div}}
\renewcommand{\epsilon}{\varepsilon}
\newcommand{\g}{\gamma}
\newcommand{\s}{\sigma}
\newcommand{\hhat}{\widehat{h}}
\newtheorem{theorem}{Theorem}
\newtheorem{lemma}[theorem]{Lemma}
\newtheorem{prop}[theorem]{Proposition}
\theoremstyle{definition}
\newtheorem{definition}[theorem]{Definition}
\newtheorem{example}[theorem]{Example}
\newtheorem{remark}[theorem]{Remark}
\title[Algebraic divisibility sequences over function fields]{Algebraic divisibility sequences over function fields}
\author[Ingram, Mah\'{e}, Silverman, Stange, Streng]{Patrick~Ingram, Val\'{e}ry~Mah\'{e}, Joseph~H.~Silverman, Katherine~E.~Stange, and Marco~Streng}
\date{\today}
\address{Department of Mathematics, Colorado State University, Fort Collins, CO 80521, USA}
\email{pingram@math.colostate.edu}
\address{EPF Lausanne, SB-IMB-CSAG, Station 8, CH-1015 Lausanne, Switzerland}
\email{valery.mahe@epfl.ch}
\address{Mathematics Department, Brown University, Box 1917, Providence, RI 02912 USA}
\email{jhs@math.brown.edu}
\address{Mathematics Institute, University of Warwick, Coventry CV4 7AL, United Kingdom}
\email{marco.streng@gmail.com}
\address{Department of Mathematics, Stanford University, 450 Serra Mall, Building 380, Stanford, CA 94305 USA}
\email{stange@math.stanford.edu}
\subjclass[2010]{Primary 11B39; Secondary 11G05}
\keywords{Lucas sequence, elliptic divisibility sequence}
\thanks{
	Ingram's research is supported by a grant from NSERC of Canada.
     Mah\'e's research is supported by the universit\'e de Franche-Comt\'e.
     Silverman's research is supported by DMS-0854755.
     Stange's research has been supported by NSERC PDF-373333
           and NSF MSPRF 0802915.
     Streng's research is supported by EPSRC grant number EP/G004870/1}
\begin{document}

\begin{abstract}
In this note we study the existence of primes and of primitive
divisors in function field analogues of classical divisibility sequences.
Under various hypotheses, we prove that Lucas sequences and
elliptic divisibility sequences over function fields defined over
number fields contain infinitely many irreducible elements. We also
prove that an elliptic divisibility sequence over a function field has
only finitely many terms lacking a primitive divisor.
\end{abstract}

\maketitle

\begin{center}
\Large
\emph{In Memory of Alf van der Poorten,}
\par
\emph{Mathematician, Colleague, Friend}
\end{center}

\section{Introduction}

Integer sequences of the form
\begin{equation}
  \label{eqn:lucform}
  L_n = \frac{ f^n - g^n }{ f - g } \in \ZZ
\end{equation}
are called \emph{Lucas sequences} (of the first kind).  Necessarily,
$f$ and~$g$ are the roots of a monic quadratic polynomial~$p(x) \in
\ZZ[x]$.  The most famous examples are the Fibonacci numbers and the
Mersenne numbers, with~$p(x) = x^2 - x - 1$ and~$p(x) = (x-2)(x-1)$,
respectively.

Lucas sequences are associated to twisted forms of the multiplicative
group~$\Gm$.  Replacing~$\Gm$ with an elliptic curve yields an
analogous class of sequences.  Let~$E/\QQ$ be an elliptic curve given
by a Weierstrass equation,  let~$P \in E(\QQ)$ be a nontorsion
point, and write
\[
  x\bigl([n]P\bigr) = A_n / D_n^2 \in \QQ
\]
as a fraction in lowest terms. The integer sequence~$(D_n)_{n \ge 1}$ 
is called the \emph{elliptic divisibility sequence} (EDS)
associated to the pair~$(E,P)$.  Both Lucas sequences and EDS are
examples of \emph{divisibility sequences}, i.e.,
\[
  m \mid n \implies L_m \mid L_n\quad\text{and}\quad D_m\mid D_n.
\]

The primality of terms in integer sequences is an old question.  For
example, a long-standing conjecture says that the Mersenne sequence
$M_n=2^n-1$ contains infinitely many primes, and more generally it is
expected that a Lucas sequence will have infinitely many prime terms
\cite{MR1484896, MR1961589, MR2196797} unless it has a ``generic''
factorization \cite{MR2429645}. On the other hand, because of the rapid growth rate of EDS, which satisfy~$\log|D_n|\gg n^2$, the prime number theorem suggests that EDS should contain only finitely many primes~\cite{MR1815962}.

In this paper we study the problem of irreducible elements in Lucas
sequences and EDS defined over one-dimensional function fields~$K(C)$,
where $K$ is a number field.  We note that this is different from the
case of function fields over finite fields, where one would expect the
theory to be similar to the case of sequences defined over number
fields. We begin with a definition.

\begin{definition}
Let~$C/K$ be a curve defined over a number field~$K$. A divisor~$D\in
\Div(C_{\Kbar})$ is \emph{defined over~$K$} if it is fixed by
$\Gal(\Kbar/K)$. It is
\emph{semi-reduced}
if every point occurs with multiplicity $0$ or~$1$.

If $D$ is defined over~$K$ and semi-reduced, and~$\Gal(\Kbar/K)$
acts transitively on the support of~$D$, then we say that~$D$ is
\emph{irreducible over~$K$}.
\end{definition}

Let~$K$ be a number field.  We consider first Lucas sequences over the
coordinate ring~$K[C]$ of an affine curve~$C$. As we have noted, it is
not true that all Lucas sequences have infinitely many prime terms, so
we impose a technical restriction which we call \emph{amenability}. 
See Definition~\ref{def:amenable} in Section~\ref{sec:multmain} for
the full definition, but for example, amenable sequences include those
of the form
\[
  L_n = \frac{f(T)^n - 1}{f(T)-1} 
\]
with~$f(T)-1$ of prime degree and irreducible in the polynomial ring
$K[T]$. With the amenability hypothesis, we are able to prove
that~$L_q$ is irreducible for a set of primes~$q$ of positive lower
density (we recall the definition of Dirichlet density in Section~\ref{sec:multmain}).

\begin{theorem}
\label{th:mainmultresult}
Let~$K$ be a number field, let~$C/K$ be an affine curve, let~$K[C]$
denote the affine coordinate ring of~$C/K$, and let $L_n \in K[C]$
be an amenable Lucas sequence.  Then the set of primes~$q$ such that
$\div(L_q)$ is irreducible over~$K$ has positive lower
Dirichlet density.
\end{theorem}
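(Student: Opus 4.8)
The plan is to translate the irreducibility of $\div(L_q)$ into the irreducibility of a single fiber of the associated map $u=f/g\colon C\to\PP^1$, and then to exhibit a positive-density set of such primes $q$ by reducing modulo a prime above $q$ and invoking the Chebotarev density theorem. Write $L_q=\prod_{\zeta}(f-\zeta g)$, the product running over the $q-1$ primitive $q$-th roots of unity; then on the affine curve $\div(L_q)=u^{*}\bigl(\sum_{\zeta}[\zeta]\bigr)$, and amenability is exactly what guarantees that the zeros and poles of $f$ and $g$ contribute nothing here, that $\deg u$ is a prime $p$, and that the divisor $\div(f-g)=u^{-1}(1)$ (in the model example, the zero set of $f(T)-1$) is irreducible over $K$ of degree $p$. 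Since the branch locus of $u$ is a fixed finite set, for all but finitely many $q$ no primitive $q$-th root of unity is a branch point, so $\div(L_q)$ is reduced of degree $p(q-1)$; and $[K(\zeta_q):K]=q-1$ for all but finitely many $q$ (as $K\cap\QQ^{\mathrm{ab}}$ is a finite extension of $\QQ$), so for such $q$ the group $\Gal(\Kbar/K)$ permutes the $\zeta$'s transitively. Hence, for all but finitely many $q$, $\div(L_q)$ is irreducible over $K$ as soon as the fiber $u^{-1}(\zeta_q)$ is a single $\Gal(\Kbar/K(\zeta_q))$-orbit, i.e.\ as soon as the scheme-theoretic fiber of $u$ over the closed point $\zeta_q\in\PP^1_K$ is a single closed point.

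To obtain this for many $q$, the key observation is that $q$ is totally ramified in $\QQ(\zeta_q)$: for any prime $\fq$ of $K$ above $q$ and any prime $\fQ$ of $K(\zeta_q)$ above $\fq$, we have $\zeta_q\equiv 1\MOD{\fQ}$ and $\FF_{\fQ}=\FF_{\fq}$. Therefore, provided $\fq$ lies outside the finite set of primes of bad reduction for the cover $u$ (a set chosen independently of $q$), reducing modulo $\fQ$ carries the fiber $u^{-1}(\zeta_q)$ to the reduction of $u^{-1}(1)$ over $\FF_{\fq}$, with no drop in degree; so if $u^{-1}(1)$ remains irreducible modulo $\fq$, then $u^{-1}(\zeta_q)$ is irreducible over $K(\zeta_q)$, as required.

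It remains to produce, for a positive-density set of primes $q$, a prime $\fq$ of $K$ above $q$ modulo which $u^{-1}(1)$ stays irreducible. Let $M/K$ be the Galois closure of the residue field of a point of $u^{-1}(1)$; since $\div(f-g)=u^{-1}(1)$ is irreducible over $K$ of prime degree $p$, the group $G=\Gal(M/K)$ is a transitive subgroup of $S_p$, and by Cauchy's theorem $G$ contains a $p$-cycle. The primes $\fq$ of $K$ modulo which $u^{-1}(1)$ stays irreducible are exactly those whose Frobenius is a $p$-cycle in $G$, a set of positive density by Chebotarev applied to $M/K$; almost all of them have residue degree $1$ over $\QQ$, and the rational primes $q$ beneath them therefore form a set of positive lower Dirichlet density. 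Intersecting with the cofinite ``good'' set of the first paragraph, we obtain a positive-lower-density set of primes $q$ for which the two preceding paragraphs give that $\div(L_q)$ is irreducible over $K$.

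The step I expect to need the most care is the Chebotarev bookkeeping: when $K/\QQ$ is not Galois a $G$-conjugacy class of $p$-cycles need not come from a single $\Gal(M/\QQ)$-class, so one must run Chebotarev over $K$ and descend to the rational primes beneath the degree-one primes of $K$, which is exactly what yields a lower-density rather than a density statement. Secondary technical points are making the reduction step uniform in $q$ --- for which one should work throughout with a projective integral model of $u\colon C\to\PP^1$, so that specializing $\zeta_q$ to $1$ introduces no spurious degree drop --- and drawing the three geometric inputs used above ($\deg u=p$ prime, the degree count $\deg\div(L_q)=p(q-1)$, and the irreducibility of $\div(f-g)$) out of the precise definition of amenability, together with the minor variant needed when the sequence is genuinely twisted and $u$ lives on a quadratic cover of $C$.
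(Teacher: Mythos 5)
Your proposal is correct and takes essentially the same route as the paper's proof: factor $L_q$ over $K(\zeta_q)$, use linear disjointness (for all but finitely many $q$) to reduce to a single factor over $K(\zeta_q)$, exploit the total ramification of $q$ in $\QQ(\zeta_q)$ (so $\zeta_q\equiv 1$ and the residue field does not grow) to compare that factor with $\div(f-g)$ modulo a prime above $q$, and conclude with Chebotarev applied to a $p$-cycle (prime degree) together with the descent from degree-one primes of $K$ to rational primes and the quadratic-cover reduction. Your packaging via fibers of $u=f/g$ and a projective integral model is just a geometric rephrasing of the paper's degree bookkeeping, which it performs directly from condition (2) of amenability.
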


\begin{example}
Let~$C$ be the affine line, so~$K[C]=K[T]$. Then a function~$f(T) \in
K[T]$ is irreducible if and only if its divisor~$\div(f) \in
\Div(C)$ is irreducible.  As a specific example,
the polynomial
\[
  L_q=\frac{(T^2+2)^q-1}{T^2+1} \in \QQ[T]
\] 
is irreducible in~$\QQ[T]$ for all primes~$q\equiv 3\MOD{4}$, although
we note that computations suggest that these~$L_q$ are in fact
irreducible for all primes~$q$.  See Section~\ref{sec:ex} for more
details on this example.
\end{example}

The definition of elliptic divisibility sequences over~$\QQ$ depends
on writing a fraction in lowest terms. We observe that the denominator
of the~$x$-coordinate of a point~$P$ on a Weierstrass curve measures
the primes at which~$P$ reduces to the point~$\ptinf$ at
infinity.  We use this idea in order to define
our more canonical
notion of EDS over function fields,
which does not depend on a choice of model, only
on $E/K$ and $P\in E(K)$.

\begin{definition} 
\label{def:ellsurfeds}
Let~$K(C)$ be the function field of a smooth projective curve~$C$,
let~$E/K(C)$ be an elliptic curve defined over the function field
of~$C$, and let~$\Ecal\to C$ be the minimal proper regular model
of~$E$ over $C$. \footnote{The minimal proper regular model
is a smooth projective surface over $K$
associated to~$E$. 
See Section~\ref{section:primitivedivisors} for more information.}
  Let~$\Ocal\subset \Ecal$ be the image of the zero
section. Each point~$P\in E\bigl(K(C)\bigr)$ induces a map
$\s_P:C\to\Ecal$. The \emph{elliptic divisibility sequence} associated
to the pair~$(E,P)$ is the sequence of divisors
\[
  D_{nP} = \s_{nP}^*(\Ocal) \in \Div(C),\qquad n\ge1.
\]
(If~$nP=\ptinf$, we leave~$D_{nP}$ undefined.)
\end{definition}

The general problem of irreducible elements in EDS over function
fields appears difficult. Even the case of a split elliptic curve,
which we study in our next result, presents challenges.

\begin{theorem}
\label{th:mainEDSresult} 
Let~$K$ be a number field, let~$K(C)$ be the function field of a curve
$C$, and let~$(D_{nP})_{n\ge1}$ be an elliptic divisibility sequence,
as described in Definition~$\ref{def:ellsurfeds}$, corresponding to a
pair~$(E,P)$. Suppose further that
\begin{enumerate}[ i)]
  \item the elliptic curve~$E$ is split, i.e.,~$E$ is isomorphic to a curve over~$K$;
  \item the elliptic curve~$E$ does not have CM;
  \item the point~$P\in
E\bigl(K(C)\bigr)$ is nonconstant; and
\item the divisor~$D_P$ is
irreducible over~$K$ and has prime degree.
\end{enumerate}
Then the set of rational
primes~$q$ such that the divisor \text{$D_{qP}-D_P$} is irreducible
has positive lower Dirichlet density.
\end{theorem}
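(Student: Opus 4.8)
The first step is to make the split hypothesis concrete. By~(i) we may assume $E$ is defined over~$K$; since a constant elliptic curve has good reduction at every place of~$C$, its minimal proper regular model over~$C$ is $\Ecal = E\times C$, with $\Ocal = \{O\}\times C$. Write $\varphi = \s_P\colon C\to E$ for the morphism attached to~$P$, which is nonconstant by~(iii). Then $\s_{nP} = ([n]\circ\varphi,\mathrm{id}_C)$, and since $[n]$ is \'etale in characteristic~$0$,
\[
  D_{nP} = \s_{nP}^*(\Ocal) = \varphi^*\bigl([n]^*(O)\bigr) = \sum_{T\in E[n]}\varphi^*(T),
\]
so that $D_{qP}-D_P = \sum_{T\in E[q]\setminus\{O\}}\varphi^*(T)$. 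Hypothesis~(iv) says that $D_P = \varphi^*(O)$ is irreducible over~$K$ of prime degree~$\ell$; hence $\deg\varphi = \ell$ is prime, $\varphi$ is unramified over~$O$, and $\Gal(\Kbar/K)$ acts transitively on $\varphi^{-1}(O)$. I would then dispose of the routine points. The branch locus of~$\varphi$ is a fixed finite subset of $E(\Kbar)$, containing only finitely many torsion points, all of bounded order, so for all but finitely many primes~$q$ the morphism $\varphi$ is unramified over $E[q]$. For such~$q$ each $\varphi^*(T)$ with $T\in E[q]$ is reduced of degree~$\ell$, and $\varphi^*(T)$, $\varphi^*(T')$ have disjoint support when $T\ne T'$, so $D_{qP}-D_P$ is semi-reduced; it is obviously defined over~$K$. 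Everything reduces to showing that $\Gal(\Kbar/K)$ acts transitively on $\Supp(D_{qP}-D_P) = \varphi^{-1}(E[q]\setminus\{O\})$ for a set of primes~$q$ of positive lower Dirichlet density.

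For the transitivity I would use the $\Gal(\Kbar/K)$-equivariant map $\varphi^{-1}(E[q]\setminus\{O\}) \to E[q]\setminus\{O\}$. Since $E$ has no CM, Serre's open image theorem gives surjectivity of the mod-$q$ representation $\Gal(\Kbar/K)\to\GL_2(\FF_q)$ for all but finitely many~$q$; for such~$q$ the group $\Gal(\Kbar/K)$ acts transitively on $E[q]\setminus\{O\}$, and the stabilizer of a point~$T_0$ of exact order~$q$ is $\Gal(\Kbar/K(T_0))$, where $K(T_0)$, of degree $q^2-1$ over~$K$, is the field of definition of~$T_0$. By the orbit--stabilizer correspondence, $\Gal(\Kbar/K)$ is transitive on $\varphi^{-1}(E[q]\setminus\{O\})$ if and only if $\Gal(\Kbar/K(T_0))$ is transitive on $\varphi^{-1}(T_0)$, i.e.\ if and only if the fibre $\varphi^*(T_0)$ is irreducible over~$K(T_0)$. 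Thus the theorem reduces to: \emph{for a set of primes~$q$ of positive lower Dirichlet density, the fibre of~$\varphi$ over a point of exact order~$q$ is irreducible over the field of definition of that point.}

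To prove this I would analyze the cover~$\varphi$ via its monodromy. Let $G$ be the Galois group of the Galois closure of $\varphi^*\colon K(E)\hookrightarrow K(C)$, acting (transitively, since $K(C)$ is a field) as a group of permutations of the $\ell$ sheets, and let $M\trianglelefteq G$ be the geometric monodromy; since $\ell$ is prime, either $M$ is transitive on the sheets (i.e.\ $C$ is geometrically irreducible) or $\varphi$ is, geometrically, the fold map of an $\ell$-fold constant cover. For $T\in E$ outside the branch locus the action of $\Gal(\Kbar/K(T))$ on $\varphi^{-1}(T)$ factors through~$G$, and $\varphi^*(T)$ is irreducible over $K(T)$ as soon as this action contains~$M$; the set of $T$ for which it does not is a thin set $\mathcal T = \bigcup_j \pi_j\bigl(C_j(\Kbar)\bigr)$ (together with the branch locus), the union over the finitely many geometric covers $\pi_j\colon C_j\to E$ of degree $\ge 2$ attached to intransitive subgroups of~$M$, and by Riemann--Hurwitz each $C_j$ has genus $\ge 1$, with genus~$1$ only when $\varphi$ is \'etale. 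It remains to show that a point of exact order~$q$ avoids~$\mathcal T$ for $q$ in a set of positive lower density. In the main case---$C$ geometrically irreducible and $\varphi$ ramified---each $C_j$ has genus $\ge 2$, and a point $T_0$ of order~$q$ lying in $\pi_j\bigl(C_j(K(T_0))\bigr)$ would, using surjectivity of the mod-$q$ representation, produce by taking Galois conjugates a section of~$\pi_j$ over the closed point of~$E$ of degree $q^2-1$ supported on $E[q]\setminus\{O\}$, hence a closed point of~$C_j$ of degree $q^2-1$ lying over a torsion point of~$E$; a uniform finiteness statement for such points on a curve of genus $\ge 2$, obtained from Faltings/Mordell--Lang applied over the tower of division fields $K(E[q])$, then shows this happens for only finitely many~$q$. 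In the remaining cases---$\varphi$ \'etale (so $\varphi$ and the $\pi_j$ are isogenies up to translation), or $C$ not geometrically irreducible (so the whole question is governed by the constant field $\kappa$ of $K(C)$)---irreducibility of $\varphi^*(T_0)$ over $K(T_0)$ becomes the condition that $\kappa$ (or $K(E[\ell])$, or the field cut out by $\ker\varphi$) is linearly disjoint over~$K$ from $K(T_0)$, which is a Chebotarev-type condition on~$q$ determined by the position of these fixed fields in the division tower; it holds for a set of~$q$ of positive density but in general not density~$1$, and this is the source of ``positive lower density'' in the statement. Here hypothesis~(iv)---irreducibility of the special fibre over the $K$-rational point~$O$---is what excludes the degenerate configurations and forces the monodromy at~$O$ to be as large as possible. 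Intersecting with the density-one set of~$q$ on which the mod-$q$ representation is surjective and $\varphi$ is unramified over $E[q]$ then completes the proof.

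The step I expect to be the main obstacle is precisely this last one: the uniform (in~$q$) finiteness claim for the higher-genus covers~$C_j$, ruling out a point of~$C_j$ defined over the division field $K(T_0)$ that maps to a torsion point of exact order~$q$ of~$E$. This is where a Mordell--Lang/Faltings finiteness statement over a tower of division fields, combined with the lower bound $[K(T_0):K] = q^2-1$ coming from Serre's theorem, must be brought to bear, and a careful treatment of the constant-field extension inside the Galois closure of~$\varphi$ (and of the \'etale case) is what keeps the conclusion from being a full-density statement.
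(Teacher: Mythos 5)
Your opening reduction is the same as the paper's: by Serre's theorem the action of $\Gal(\Kbar/K)$ on $E[q]\smallsetminus\{\ptinf\}$ is transitive for large $q$, so irreducibility of $D_{qP}-D_P$ reduces to irreducibility of the single fibre $\s_P^*(Q)$ over $K(Q)$ for one point $Q$ of exact order $q$. After that, however, your argument has a genuine gap at exactly the step you flag as the main obstacle. Your treatment of the main case rests on ``a uniform finiteness statement for such points on a curve of genus $\ge 2$, obtained from Faltings/Mordell--Lang applied over the tower of division fields $K(E[q])$.'' No such theorem is available: Faltings gives finiteness of $C_j(F)$ only for a \emph{fixed} number field $F$, whereas here the field $K(Q)$ has degree $q^2-1$ growing with $q$, and finiteness (or any uniform-in-$q$ control) of rational points of a fixed curve of genus $\ge2$ over the union of the division fields is an open problem. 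Nor does Manin--Mumford/Raynaud help, since the points of $C_j$ above $E[q]$ are not torsion points of any ambient group variety. So the crucial claim---that a point of exact order $q$ avoids your thin set $\mathcal T$ for a positive density of $q$---is unsupported. A symptom of the problem is that your version of the main case would give \emph{all but finitely many} $q$ without ever using that $\deg D_P$ is prime, whereas the paper's remark after the theorem explains that prime degree is needed precisely to produce the positive-density set of primes, and that only positive density (not density one) is obtained.

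The paper's proof replaces your generic monodromy/thin-set analysis by a purely local argument at a prime above $q$ itself. For a degree-one prime $\gq$ of $K$ of good \emph{ordinary} reduction with $q=N_{K/\QQ}(\gq)$, Serre's description of the decomposition and inertia groups at ordinary primes (Borel image, inertia containing the order-$(p-1)$ torus) yields a degree-one prime $\gQ$ of $K(Q)$ above $\gq$ with $Q\equiv\ptinf\MOD{\gQ}$ (Lemma~\ref{lemma:deg1prime}). Then $\s_P^*(Q)\equiv\s_P^*(\ptinf)=D_P\MOD{\gQ}$, so irreducibility of $D_P\bmod\gq$ over the residue field forces irreducibility of $\s_P^*(Q)$ over $K(Q)$ (Lemma~\ref{lem:EDSlemmaM_P}). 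Finally, the set of degree-one ordinary primes $\gq$ at which $D_P\bmod\gq$ is irreducible has positive density by Chebotarev applied to the field generated by $\Supp(D_P)$---this is where the hypothesis that $D_P$ is irreducible of \emph{prime} degree is indispensable (it guarantees a full cycle in the Galois group acting on $\Supp(D_P)$), and it is the true source of the ``positive lower Dirichlet density'' in the statement. If you wish to repair your write-up, this reduction-mod-$\gQ$ substitution is the missing idea; the monodromy framework by itself cannot be closed with currently known theorems.
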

\begin{remark}
If $P$ is constant, then the EDS is trivial. The condition
that $D_P$ is irreducible is also necessary, as counterexamples
can be obtained from Theorem~\ref{thm:isogenyEDS} below.
We will explain below Theorem~\ref{th:EDSprimitivedivisors} why
$q$ must be prime.

The other conditions, that $E$ is split and non-CM,
and that $D_P$ has prime degree, are consequences of our methods. 
We will use the Galois theory of $E[q]$ over~$K$, which 
looks very different if $E$ is non-split or CM. 
And we will employ the fact that $q$ is inert in the field 
extension $K(D_P)/K$ for a positive density of primes~$q$,
a fact that is true by Chebotarev's density theorem
if the degree of the field extension is prime
(Lemma~\ref{lem:EDSchebotarev}), but not in general.
\end{remark}

The proofs of Theorems~\ref{th:mainmultresult} and
\ref{th:mainEDSresult} are similar.  In both cases, the sequence in
question arises from a certain point~$P$ in an algebraic group
(the multiplicative group $\Gm$ in the former case) over~$K$. 
And in both cases, the point $P$ is defined over $\overline{K(C)}$, and the
$q^{\text{th}}$~term of the sequence corresponds to the divisor on $C$
over which the point $P$ meets the~$q$-torsion of the group.  If the
absolute Galois group of $K$ acts transitively
on the points of order~$q$,
then proving the irreducibility of the divisor is the same
as proving the irreducibility of the divisor of intersection of $P$
with a single $q$-torsion point.  We complete the proof by analyzing
the divisor locally at primes lying above~$q$.

Although the question of whether or not there are infinitely many
Mer\-senne primes is perhaps the best known problem concerning primes in
divisibility sequences, another question that has received a great
deal of attention in both the multiplicative and elliptic cases is the
existence of \emph{primitive divisors}.  A primitive divisor of a
term~$a_n$ in an integer sequence is a prime divisor of~$a_n$ that
divides no earlier term in the sequence.

Here we give a result for general one-dimensional function fields of
characteristic zero. We refer the reader to
Section~$\ref{section:primitivedivisors}$ for definitions and further
details, and to Section~$\ref{sec:history}$ for a discussion of work
on primitive divisors in other contexts.

\begin{theorem}
\label{th:EDSprimitivedivisors}
Let~$K$ be a field of characteristic zero, and let~$(D_{nP})_{n\ge1}$
be an EDS defined over~$K(C)$, the function field of a curve. Assume
further that there is no isomorphism
$\psi:E\rightarrow E'$ over $\Kbar(C)$ with $E'$ defined over $\Kbar$ and
$\psi(P)\in E'(\Kbar)$,
and assume
the point~$P$ is nontorsion. Then for all but finitely
many~$n$, the divisor~$D_{nP}$ has a primitive divisor.
\end{theorem}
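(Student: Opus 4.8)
The plan is to prove Theorem~\ref{th:EDSprimitivedivisors} by translating the statement into the language of intersection theory on the minimal proper regular model $\Ecal\to C$ and then running a height/degree argument. First I would unwind the definition: a prime (place) $v$ of $K(C)$ divides $D_{nP}$ precisely when $\s_{nP}(v)$ lies on the zero section $\Ocal$, i.e.\ when the point $nP$ reduces to the identity modulo $v$; such a $v$ is a \emph{primitive divisor} of $D_{nP}$ if moreover $mP$ does not reduce to the identity modulo $v$ for any $0<m<n$. Since $E$ is an elliptic curve over the function field $K(C)$, for each place $v$ there is a reduction map and a well-defined order of vanishing, and the set of $v$ dividing $D_{nP}$ is finite. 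The key invariant is $\deg D_{nP}$, which I would relate via the theory of the canonical height on $E/K(C)$ (in the sense of the Shioda--Tate / N\'eron--Tate theory for elliptic surfaces): one has $\deg D_{nP} = n^2\hhat(P) + O(1)$ where the $O(1)$ comes from the finitely many fibres of bad reduction and the contribution of the correction terms, and the hypothesis that $P$ is nontorsion combined with the hypothesis that $(E,P)$ is not ``constant after isomorphism'' guarantees $\hhat(P)>0$ (this is exactly where the no-isomorphism hypothesis is needed, since a point becoming constant over $\Kbar(C)$ would have height zero, and conversely by a function-field Lang--N\'eron type statement height zero forces constancy up to isomorphism).

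The next step is to bound the ``imprimitive part'' of $D_{nP}$. A place $v$ dividing $D_{nP}$ but not primitive divides $D_{mP}$ for some proper divisor $m\mid n$ with $m<n$ (here one uses that the reduction $\tilde E_v$, if it is again an elliptic curve, or more generally its group of nonsingular points, has the property that the subgroup of points reducing to the identity is determined by the order, so $v\mid D_{nP}$ and $v\mid D_{mP}$ with $m<n$ implies $v\mid D_{\gcd}$; and in the bad-reduction case one absorbs the finitely many such $v$ into the error term). Hence the support of the imprimitive part of $D_{nP}$ is contained in $\bigcup_{p\mid n}\Supp D_{(n/p)P}$ together with a fixed finite set $S$ of places where the fibre is singular. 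The divisibility-sequence structure, $D_{mP}\mid D_{nP}$ when $m\mid n$ (which follows from the group-theoretic fact that $mP$ reduces to $\ptinf$ whenever a multiple does), lets me bound the degree of the imprimitive part by $\sum_{p\mid n}\deg D_{(n/p)P} + \#S \le \sum_{p\mid n}\bigl((n/p)^2\hhat(P)+c\bigr) + \#S$. Since $\sum_{p\mid n}(n/p)^2 \le n^2\sum_{p\mid n}p^{-2} \le n^2\sum_{p\ge 2}p^{-2} < n^2$ with room to spare --- more precisely the sum is $O(n^2/(\log n)^{?})$ or at worst bounded by $n^2\cdot(\text{a constant}<1)$ for $n$ large, and one should rather bound it by noting each term is at most $(n/2)^2$ and there are at most $\log_2 n$ of them, giving $O(n^2\log n / \text{something})$ --- I would run the cleaner estimate $\sum_{p\mid n}(n/p)^2\le \tfrac14 n^2 \cdot \omega(n)$ does not quite work, so instead use $\le n^2\sum_{p}p^{-2}$ only over primes dividing $n$, which is at most $n^2(1/4+1/9+1/25+\cdots)<n^2/2$. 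Thus the imprimitive part has degree $\le \tfrac12 n^2\hhat(P) + c\log_2 n + \#S$.

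Comparing, the primitive part of $D_{nP}$ has degree at least $n^2\hhat(P) - O(1) - \bigl(\tfrac12 n^2\hhat(P)+c\log n+\#S\bigr) \ge \tfrac12 n^2\hhat(P) - O(n)$, which is strictly positive for all $n$ sufficiently large. Hence $D_{nP}$ has a primitive divisor for all but finitely many $n$, which is the claim. The main obstacle I anticipate is making the error terms genuinely uniform and model-independent: the contributions from the fibres of bad reduction to $\deg D_{nP}$ are, a priori, not obviously $O(1)$ as $n$ varies (a place of bad reduction can contribute a growing multiplicity to $D_{nP}$ depending on the component group and how $nP$ moves through it), so one must invoke the standard fact --- provable by passing to a finite base change or by the theory of local heights --- that these contributions are bounded by a constant times the number of bad fibres, independent of $n$, because the local height function is bounded away from bad fibres and the component-group contribution is periodic in $n$. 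Establishing this cleanly (and verifying that $\hhat(P)>0$ under precisely the stated non-isomorphism hypothesis, using the structure theory of the N\'eron--Severi group of the elliptic surface $\Ecal$) is the technical heart of the argument; once it is in place, the rest is the divisibility-sequence bookkeeping sketched above.
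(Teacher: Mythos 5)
Your overall skeleton (relate $\deg D_{nP}$ to $n^2\hhat(P)$ with $\hhat(P)>0$ under the no-isomorphism hypothesis, then bound the imprimitive part by terms $D_{mP}$ with $m\mid n$, $m<n$, and compare degrees) is exactly the paper's strategy, and your estimate $\sum_{p\mid n}(n/p)^2<\tfrac12 n^2$ plays the role of the paper's $\zeta(2)-1<\tfrac23$ bound. But there is a genuine gap at the crucial step where you bound the \emph{degree} of the imprimitive part by $\sum_{p\mid n}\deg D_{(n/p)P}+\#S$. You justify this by the divisibility-sequence structure $D_{mP}\mid D_{nP}$ for $m\mid n$, but that inequality points the wrong way: it gives $\ord_v(D_{mP})\le\ord_v(D_{nP})$, whereas what you need is an upper bound $\ord_v(D_{nP})\le\ord_v(D_{(n/p)P})$ at every imprimitive place $v$. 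Your gcd argument only controls the \emph{support} of the imprimitive part, not its multiplicities; a priori $\ord_v(D_{nP})$ could grow without bound along multiples of the rank of apparition of $v$, and then the degree of the imprimitive part need not be dominated by $\sum_{p\mid n}\deg D_{(n/p)P}$.

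The missing ingredient is precisely the ``rigid divisibility'' property special to characteristic-zero function fields, which is the technical heart of the paper's proof (its Lemma~\ref{lemma:edsformalgp}): if $m$ is minimal with $\ord_\g(D_{mP})\ge1$, then $\ord_\g(D_{nP})=\ord_\g(D_{mP})$ for every multiple $n$ of $m$, and $\ord_\g(D_{nP})=0$ otherwise. This is proved via the formal-group filtration $E(K(C))_{\g,r}$: the graded quotients are isomorphic to the residue field, which has characteristic zero and hence is torsion-free, so multiplication by an integer never pushes a point deeper into the filtration. (Over number fields this fails, and multiplicities grow, which is why that setting needs Siegel-type input.) Once you insert this lemma, your bound on the imprimitive part is valid and your argument coincides with the paper's. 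A secondary remark: the issue you flag as the ``technical heart'' --- growing contributions at bad fibres --- is not actually where the difficulty lies; since $D_{nP}$ is defined via the minimal proper regular model, the comparison $\deg D_{nP}=\hhat_E(nP)+O_E(1)$ with $O_E(1)$ independent of $n$ is a standard fact for elliptic surfaces (the paper quotes it directly), and the positivity $\hhat_E(P)>0$ under the stated hypotheses is exactly the paper's Proposition~\ref{prop:non-vanishing-height}, as you anticipated.
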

\begin{remark}
The conditions on $E$ and $P$ in Theorem~\ref{th:EDSprimitivedivisors}
are necessary. Indeed, 
if an isomorphism $\psi$ as above
exists,
then the EDS is trivial, and if $P$ is torsion,
then it is periodic.
\end{remark}

Theorem \ref{th:mainEDSresult} focuses on the study of irreducible
terms~$D_{nP}$ in elliptic divisibility sequences over~$K(C)$ when the
index~$n$ is prime. The fact that~$D_{nP}$ is a divisibility sequence
suggests that this restriction to prime indices is necessary, since if
$m\mid n$, then~$D_{nP}$ always decomposes into a sum
$D_{nP}=D_{mP}+(D_{nP}-D_{mP})$ of divisors defined
over~$K$. Thus~$D_{nP}$ is reducible unless either~$D_{mP}=0$
or~$D_{nP}=D_{mP}$, and the theorem on primitive divisors
(Theorem~\ref{th:EDSprimitivedivisors}) says that~$D_{nP}\ne D_{mP}$
if~$n$ is sufficiently large.  More generally, a \emph{magnified EDS}
is an EDS that admits a type of generic factorization. We will prove
that magnified~EDS have only finitely many irreducible terms; see
Theorem~\ref{thm:two-primitive-divisors}, and
Theorem~\ref{thm:isogenyEDS} for a related stronger result.  We also
refer the reader to~\cite[Theorem~1.5]{MR2429645} for
effective bounds (for $K(C)=\QQ(t)$)
that are proven using the function field analogue of
the ABC~conjecture.

We conclude our introduction with a brief overview of the contents of
this paper. In Section~\ref{sec:history} we motivate our work with
some historical remarks on the study of primes and primitive divisors
in divisibility sequences.  Section~\ref{sec:multmain} gives the proof
of Theorem~\ref{th:mainmultresult} on the existence of irreducible
terms in Lucas sequences, and Section~\ref{section:irredtermsineds}
gives the proof of the analogous Theorem~\ref{th:mainEDSresult} for
(split) elliptic divisibility sequences. 
Section~\ref{section:primitivedivisors} contains the proof of
Theorem~\ref{th:EDSprimitivedivisors} on the existence of primitive
divisors in general~EDS over function fields. In Section~\ref{sec:mag}
we take up the question of magnification in EDS and use it to show
that a magnified EDS contains only finitely many irreducible terms. We
also briefly comment on the difficulties of extending our
irreducibility methods to non-isotrivial EDS. We conclude in
Section~\ref{sec:ex} with a number of examples illustrating
our results.

\subsection*{Acknowledgments}
This project was initiated at a conference at the 
International Centre for Mathematical Sciences
in Edinburgh in 2007 and originally included the five authors, Graham
Everest, and Nelson Stephens.  Graham is unfortunately no longer with
us, but his ideas suffuse this work, and we take this opportunity to
remember and appreciate his life as a valued colleague and friend.  We
also thank Nelson for his input during the original meeting, and
Maarten Derickx, Michael Rosen, and Jonathan Wise for helpful
discussions as the project approached completion.

\section{History and Motivation}
\label{sec:history}

In this section we briefly discuss some of the history of primes and
primitive divisors in divisibility sequences over various types of
rings and fields. This is primarily meant to provide background and to
help motivate our work over function fields.

The search for Mersenne primes~$2^n-1$ was initiated by the French monk
Marin Mersenne in the early~$17^{\text{th}}$-century and continues
today in the form of a distributed computer program currently running
on nearly half a million CPUs~\cite{GIMPS}.  More generally, most
integer Lucas sequences are expected to have infinitely many prime
terms \cite{MR1484896, MR1961589, MR2196797}.  The only obvious
exceptions occur with a type of generic factorization
\cite{MR2429645}. For example, if~$f$ and~$g$ are positive
coprime integers, then the Lucas sequence associated to~$f^2$ 
and~$g^2$, 
\begin{equation}
  \label{eqn:genfac}
   L_n = \frac{ f^{2n} - g^{2n} }{f^2 - g^2} =
   \left( \frac{f^n - g^n}{f - g} \right) \left(
   \frac{ f^n + g^n }{f + g} \right),
\end{equation}
contains only finitely many primes.

We remark that Seres~\cite{MR0155816, MR0179158} has considered
various irreducibility questions about compositions of the form
$\Phi_n\bigl(f(x)\bigr)$, where~$\Phi_n(x)$ is the~$n^{\text{th}}$
cyclotomic polynomial.  These results, however, all focus on the case
where \text{$f(x)\in\ZZ[x]$} has many integer roots,
while we focus on the case where $f-1$ is irreducible.

Elliptic divisibility sequences were first studied formally by Ward
\cite{MR0027286,MR0023275}, although Watson \cite{watson} considered
related sequences in his resolution of Lucas' square pyramid problem. 
Recently, the study of elliptic divisibility sequences has seen
renewed interest \cite{MR2220263,MR2178070,MR2747036,MR2226354,
  Stange10, MR2377368}, including applications to Hilbert's 10th
problem \cite{MR2377127,MR2480276,MR1992832} and cryptography
\cite{StangeLauter09, Shipsey00, MR2423649}.  (We remark that some
authors use a slightly different definition of EDS via the division
polynomial recursion. See the cited references for details.
These definitions differ only in finitely many valuations
(see \cite[Th\'eor\`eme~A]{MR1185022}).)

The~$n^{\text{th}}$~Mersenne number~$M_n$ can be prime only if~$n$ is
prime, and the prime number theorem suggests that~$M_q$ has
probability~$1/\log M_q$ of being prime. Thus the number of prime
terms~$M_q$ with~$q\leq X$ should grow like~$\sum_{q\le X}
q^{-1}\approx\log\log(X)$.  This argument fails to take into account some
nuances, but a more careful heuristic analysis by Wagstaff
\cite{MR679454} refines this argument and gives reason to believe that
the number of~$q\leq X$ such that~$M_q$ is prime should be asymptotic
to~$e^\gamma\log\log_2(X)$.

The study of prime terms of elliptic divisibility sequences began with
Chudnovsky and Chudnovsky~\cite{MR0866702}, who searched for primes
computationally.  An EDS over~$\ZZ$ grows much faster:~$\log |D_n|\gg {n^2}$,
and again only prime indices can give prime terms (with finitely many
exceptions), so a reasonable guess is that
\[
  \#\{n\ge1 : \text{$D_n$ is prime}\} 
  \ll \sum_{\text{$q$ prime}} \frac{1}{\log D_q}
  \ll \sum_{\text{$q$ prime}} \frac{1}{q^2}
  \ll 1.
\]
Building on the heuristic argument above, Einsiedler, Everest, and Ward~\cite{MR1815962} conjectured that an EDS
has only finitely many prime terms, and this conjecture was later
expanded upon by Everest, Ingram, Mah\'{e} and Stevens
\cite{MR2429645}.  For some EDS, finiteness follows from a type of
generic factorization not unlike \eqref{eqn:genfac}
(cf.\ \cite{MR2429645, MR2164113, MR2045409, Mahe-Explicit-bounds} and Section
\ref{sec:mag}), but the general case appears difficult.

The study of primitive divisors in integral Lucas sequences goes back
to the~$19^{\text{th}}$-century work of Bang \cite{Bang} and Zsigmondy
\cite{MR1546236}, who showed that~\text{$a^n-b^n$} has a primitive divisor
for all~$n>6$, and has a long history
\cite{MR1502458,MR0344221,MR0476628,MR1284673}, culminating in the
work of Bilu, Hanrot, and Voutier \cite{MR1863855}, who proved that a
Lucas sequence has primitive divisors for each index~$n>30$.  Flatters
and Ward considered the analogous question over polynomial
rings~\cite{flattersward}.

Work on primitive divisors in EDS is more recent, although we note
that in~1986 the third author included the existence of primitive
divisors in EDS as an exercise in the first edition
of~\cite{MR2514094} (for the full proof, see~\cite{MR961918}). 
A number of authors have given bounds on the
number of terms and/or the largest term that have no primitive divisor
for various types of EDS, as well as studying generalized primitive
divisors when~$\operatorname{End}(E)\ne\ZZ$; see~\cite{MR2220263,
  MR2301226, MR2605536, ingramsilverman06, MR2377368,
  voutieryabuta}.  The proofs of such results generally require deep
quantitative and/or effective versions of Siegel's theorem on
integrality of points on elliptic curves.

\section{Proof of Theorem~\ref{th:mainmultresult}---Irreducible Terms in Lucas Sequences}
\label{sec:multmain}

For this section, we let~$K$ be a number field, we take~$C/K$ to be a
smooth affine curve defined over~$K$, and we write~$K[C]$ for the
affine coordinate ring of~$C/K$.  We begin with the definition of
amenability, after which we prove that amenable Lucas sequences over
$K[C]$ have infinitely many irreducible terms.

\begin{definition}
The degree of a divisor 
\[
  D=\sum_{P\in C} n_P (P) \in\Div(C_{\Kbar})
  \quad\text{is the sum}\quad
  \deg(D)=\sum_{P \in C} n_P.
\]
For a regular function~$f\in K[C]$, we write~$\deg(f)$ for the degree
of the divisor of zeros of~$f$, i.e.,
\[
  \deg(f) = \sum_{P\in C} \ord_P(f).
\]
We note that since~$C$ is affine, there may be some zeros of~$f$ ``at
infinity'' that aren't counted. It need not be true that
$\deg(f~+~g)~\le~\max\bigl\{\deg(f),\deg(g)\bigr\}$.
\end{definition}

We are now ready to define our notion of amenability.

\begin{definition}
\label{def:amenable}
Let
\[
  L_n = \frac{f^n - g^n}{f - g} \in K[C]
\]
be a Lucas sequence.  First assume $f,g\in K[C]$. 
We then say that the sequence is
\emph{amenable \textup{(}over $K[C]$\textup{)}}
if the following three conditions hold:
\begin{enumerate} 
  \item~$\div(f-g)$ is irreducible over~$K$ and of prime degree, 
  \item~$\deg(f-g)$ is the generic degree of~$a f + b g$ as~$a, b$ range through~$K$,
  \item~$f$ and~$g$ have no common zeroes.
\end{enumerate}

In general, $f$ and $g$ are the roots
of the quadratic polynomial 
$$X^2 - L_2 X + (L_2^2-L_3)$$ over~$K[C].$ 
Let $C'\rightarrow C$ be a cover such that
$K[C']$ is the integral closure of $K[C]$ in
the field extension $K(C,f,g)/K(C)$. 
Now we have $f,g\in K[C']$ and
either $C'$ equals $C$, or $C'\rightarrow C$ is a double cover. 
We call $L_n$ \emph{amenable
\textup{(}over $K[C]$\textup{)}
} if it is amenable over~$K[C']$.
\end{definition}

\begin{example} 
\label{rem:lucaskt}
Suppose that we are in the case $C=\AA^1$,
i.e., $L_n$ is a Lucas sequence in the polynomial ring~$K[T]$. 
There are two cases. First, if~$f$ and~$g$ are
themselves in~$K[T]$, then~$(L_n)_{n\ge1}$ is amenable if and only if
\begin{enumerate}
  \item~$f - g$ is an irreducible polynomial of~$K[T]$ of prime degree,
  \item~$\deg(f - g) = \max\{ \deg(f), \deg(g) \}$,
  \item~$f$ is not a constant multiple of~$g$.
\end{enumerate}
Second, if~$f$ and~$g$ are quadratic over~$K[T]$,
then they are conjugate, and both
\text{$f + g$} and \text{$(f - g)^2$} are in
$K[T]$. In this case, the sequence is amenable if and only if
\begin{enumerate}
  \item~$(f - g)^2$ is an irreducible polynomial of~$K[T]$ of
    prime degree,
  \item~$\deg(f + g)\leq \frac{1}{2}\deg\bigl((f - g)^2\bigr)$,
  \item~$f + g \neq 0$.
 \end{enumerate}
\end{example}

The following lemma provides the key tool in the proof of
Theorem~\ref{th:mainmultresult}.

\begin{lemma}
  \label{lem:multlemma}
Let~$f, g\in K[C]$ be such that the associated Lucas sequence 
\[
  L_n = \frac{f^n-g^n}{f-g}
\]
is amenable, let
\[
  D_0 = \div(f-g),
\]
and define two sets of primes by
\begin{align*}
  S&=\left\{\fq\subset \Ocal_K\text{ prime } :
  \begin{tabular}{@{}l@{}}
    there is a rational prime~$q$ such that\\
   ~$\fq\mid q$ and~$\div(L_q)$ is irreducible over~$K$\\
  \end{tabular}
  \right\}, \\
  M&=\left\{\fq\subset \Ocal_K\text{ prime } :
   \begin{tabular}{@{}l@{}}
   $C$ is smooth over $(\Ocal_K/\fq)$ and\\
   $D_0$ is irreducible over~$(\Ocal_K/\fq)$
   \end{tabular}
  \right\}.
\end{align*}
Then there is a finite set~$S'$ of primes of~$\Ocal_K$ such that
\[
  M\subseteq S\cup S'.
\]
\end{lemma}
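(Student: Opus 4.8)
The plan is to show that for any prime $\fq \in M$ lying above a rational prime $q$, if $q$ is large enough (avoiding a finite bad set $S'$), then $\div(L_q)$ is irreducible over $K$, hence $\fq \in S$. The starting point is the classical factorization
\[
  X^q - Y^q = \prod_{\zeta^q = 1,\ \zeta \ne 1} (X - \zeta Y)
\]
applied with $X = f$, $Y = g$, which over $\Kbar(C)$ gives $L_q = \prod_{\zeta \ne 1} (f - \zeta g)$ and correspondingly $\div(L_q) = \sum_{\zeta \ne 1} \div(f - \zeta g)$. So the divisor $\div(L_q)$ is naturally a sum of $q-1$ pieces, permuted by $\Gal(\Kbar/K)$ according to the Galois action on the primitive $q$-th roots of unity together with the action on the $\zeta$-twists. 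The key geometric input is that, after reduction modulo $\fq$, these $q-1$ factors all collapse: since $\fq \mid q$, we have $X^q - Y^q \equiv (X-Y)^q \pmod{\fq}$, so each $\div(f - \zeta g)$ reduces to $\div(f - g) = D_0$ modulo $\fq$. Because $\fq \in M$, the reduction $D_0 \bmod \fq$ is irreducible over the residue field $\Ocal_K/\fq$ and (by amenability) has prime degree $p := \deg(f-g)$.

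Next I would use this to control $\div(L_q)$ itself. Write $\div(L_q) = \sum_j E_j$ where the $E_j$ are the irreducible-over-$K$ components; each $E_j$ is a sum of some of the $\div(f-\zeta g)$'s and reduces modulo $\fq$ to a sum $(\deg E_j / p)$ copies of $D_0 \bmod \fq$, up to the finitely many primes of bad reduction for $C$ and for the coefficients of $f$, $g$, which we throw into $S'$. Since $D_0 \bmod \fq$ is irreducible of prime degree $p$, and $C$ is smooth mod $\fq$ so that specialization of divisors is well behaved, the multiplicity with which each geometric point of $\operatorname{Supp}(D_0 \bmod \fq)$ appears in $E_j \bmod \fq$ is exactly $\deg E_j / p$; semi-reducedness of $D_0 \bmod \fq$ then forces this to be $1$, i.e.\ $\deg E_j = p$ for every $j$. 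Condition (2) of amenability (the degree $\deg(f-g)$ is generic among $af+bg$) together with condition (3) ($f$, $g$ have no common zeros) guarantees $\deg \div(f - \zeta g) = p$ for all $\zeta$, so that $\deg \div(L_q) = (q-1)p$ and hence there are exactly $q-1$ of the $\div(f-\zeta g)$'s (none degenerate) and $q-1$ divided into blocks of the same size. The upshot: $\div(L_q)$ is a sum of $(q-1)/(\text{something})$ irreducible components of equal degree $p$, and to conclude irreducibility of the whole $\div(L_q)$ it suffices to show there is just one block, i.e.\ that $\Gal(\Kbar/K)$ acts transitively on the set of $q-1$ factors $\{f - \zeta g\}$.

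For the transitivity I would argue as follows. The Galois action on the factors $f - \zeta g$ factors through its action on the primitive $q$-th roots of unity $\mu_q^\times$, which is all of $(\ZZ/q\ZZ)^\times$ provided $q$ is unramified in $K$ and $K \cap \QQ(\mu_q) = \QQ$ — this excludes only finitely many $q$, whose primes above them we put in $S'$. Transitivity of $(\ZZ/q\ZZ)^\times$ on $\mu_q^\times$ is immediate, but I must be slightly careful: a priori two different roots $\zeta, \zeta'$ could give the same divisor $\div(f-\zeta g) = \div(f - \zeta' g)$, in which case the "factors" are not literally indexed by $\mu_q^\times$. This is where amenability is used once more: if $\div(f - \zeta g) = \div(f - \zeta' g)$ then $(f - \zeta g)/(f - \zeta' g)$ is a unit in $K[C']$ — but a nontrivial such relation would force a nongeneric degree drop or a common zero of $f$ and $g$, contradicting conditions (2)–(3); I would need to check this contradiction carefully, as it is the one genuinely delicate point. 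Granting it, the $q-1$ divisors are genuinely distinct and permuted transitively by $\Gal(\Kbar/K)$, so $\div(L_q)$ is irreducible over $K$.

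Finally I would assemble the exceptional set: $S'$ consists of the primes of $\Ocal_K$ dividing the (nonzero) resultants/discriminants measuring bad reduction of $C$ and of $f,g$, together with the finitely many primes above rational primes $q$ that ramify in $K$ or fail $K \cap \QQ(\mu_q) = \QQ$, plus the finitely many primes above the (bounded) rational primes where the degree arguments could fail. Any $\fq \in M$ not in this finite set lies above some $q$ and, by the above, satisfies $\div(L_q)$ irreducible over $K$, hence $\fq \in S$; this gives $M \subseteq S \cup S'$. \emph{The main obstacle} I anticipate is the step showing the $q-1$ divisors $\div(f - \zeta g)$ are pairwise distinct (equivalently, that $L_q$ has no "accidental" factor coincidence), since this is precisely what the somewhat technical amenability hypotheses (2) and (3) are designed to rule out, and making the reduction-mod-$\fq$ comparison of multiplicities fully rigorous requires some care with flatness/smoothness of $C$ over $\Ocal_K/\fq$ so that specialization of divisors preserves degrees and supports.
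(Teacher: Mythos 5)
Your overall skeleton is the same as the paper's (factor $L_q=\prod_{j}(f-\zeta^j g)$ over $K(\zeta)$, observe that modulo a prime $\fQ$ above $q$ every factor collapses to $D_0$, feed in the hypothesis that $D_0\bmod\fq$ is irreducible, and use the cyclotomic Galois action), but the middle of your argument has a genuine gap. First, the claim that each irreducible-over-$K$ component $E_j$ of $\div(L_q)$ ``is a sum of some of the $\div(f-\zeta g)$'s'' is unjustified: a $\Gal(\Kbar/K)$-orbit meets many of the factors but need not contain any single factor entirely, and knowing that it does is essentially equivalent to the irreducibility of each factor over $K(\zeta)$, which is the heart of what has to be proved. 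Second, the step ``semi-reducedness of $D_0\bmod\fq$ forces the multiplicity to be $1$, hence $\deg E_j=p$'' is false: semi-reducedness of $\widetilde{D_0}$ says nothing about the reduction of $E_j$, which has no reason to be semi-reduced --- indeed $\div(L_q)$ itself reduces to $(q-1)\widetilde{D_0}$, since $L_q\equiv(f-g)^{q-1}$ in residue characteristic $q$. Worse, the conclusion $\deg E_j=p$ for every $j$ would contradict the very statement being proved, which asserts a \emph{single} $K$-irreducible divisor of degree $(q-1)p$; this signals a confusion between components over $K$ and over $K(\zeta)$. Third, the closing inference ``the $q-1$ factors are distinct and permuted transitively by Galois, so $\div(L_q)$ is irreducible over $K$'' is a non sequitur: transitivity on the blocks must be combined with transitivity \emph{within} a block, i.e.\ with irreducibility of a single $\div(f-\zeta^j g)$ over $K(\zeta)$, and your substitute for that step is the flawed multiplicity argument.

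What is needed (and what the paper does) is to prove directly that one factor $D_j=\div(f-\zeta^j g)$ is irreducible over $K(\zeta)$: since $1-\zeta^j\in\fQ$ one has $\widetilde{D_j}=\widetilde{D_0}$; since $q$ is totally ramified in $\QQ(\zeta)$ and (for $q$ outside a finite set) unramified in $K$, the residue fields $k_\fQ=k_\fq$ coincide, so $\widetilde{D_j}$ is irreducible over $k_\fQ$ by the hypothesis $\fq\in M$; and amenability~(2) gives $\deg D_j\le\deg D_0=\deg\widetilde{D_0}=\deg\widetilde{D_j}$, while reduction can only lose degree, so $\deg D_j=\deg\widetilde{D_j}$ and no point of $\Supp(D_j)$ escapes. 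Equal degrees plus irreducibility (in particular semi-reducedness) of $\widetilde{D_j}$ then force $D_j$ to be irreducible over $K(\zeta)$, and combining this with transitivity of $\Gal(K(\zeta)/K)\cong(\ZZ/q\ZZ)^\times$ on the factors yields irreducibility of $\div(L_q)$ over $K$. Note also that the point you single out as delicate --- distinctness of the divisors $\div(f-\zeta g)$ --- is in fact immediate from amenability~(3), since a common point of two factors would be a common zero of $f$ and $g$; the genuinely nontrivial point is the degree equality $\deg\div(f-\zeta g)=\deg D_0$, which you assert from condition~(2) alone but which is not automatic (a zero can move to infinity for special $\zeta$) and comes precisely from the reduction comparison just described.
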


\begin{proof}
Let~$q$ be a prime, and let~$\zeta$ be a primitive~$q^{th}$-root of
unity.  Working in~$K(\zeta)[C]$, the function~$L_q$ factors as
\begin{equation}
  \label{eqn:lq-prod}
   L_q=\frac{f^q-g^q}{f-g}=\prod_{j=1}^{q-1}\left(f-\zeta^j g\right).
\end{equation}
Define the corresponding divisors on~$C$ by
\[
  D_j = \div(f - \zeta^j g)\quad\text{for~$0 \le j \le q-1$.}
\]
We claim that the divisors~$D_0, \ldots D_{q-1}$ have pairwise
disjoint support.  To see this, suppose that~$P\in C(\Kbar)$ is a
common zero of~$f - \zeta^ig$ and~$f- \zeta^j g$ for some~$i \neq j$. 
Then~$P$ is a common zero of~$f$ and~$g$, which contradicts
Property~(3) of amenability.

We now assume that~$q$ is chosen sufficiently large so that
$q$ is unramified in~$K$. 
This implies that
$\QQ(\zeta)$ is linearly disjoint from~$K$ over~$\QQ$
(because~$q$ is totally ramified in $\QQ(\zeta)$ and unramified in~$K$). 
Then the group
$\Gal(K(\zeta)/K)\cong\Gal(\QQ(\zeta)/\QQ)$ acts transitively on the
terms in the product~\eqref{eqn:lq-prod}, so it also acts transitively
on the divisors~$D_j$ with~$1\le j\le q-1$.  Thus, in order to show
that
\[
  \div(L_q) = \sum_{j=1}^{q-1} D_j
\]
is irreducible over~$K$, it suffices to show that~$D_j$ is irreducible
over~$K(\zeta)$ for some~$1 \le j \le q-1$.  We do this by showing
that the reduction~$\widetilde{D_j}$ modulo some prime of~$K(\zeta)$
is irreducible and has the same degree as~$D_j$.
 
Choose primes~$\fQ\subseteq \Ocal_{K(\zeta)}$ and~$\fq\subseteq
\Ocal_K$ with~$\fQ\mid\fq\mid q$.  We may suppose that~$q$ is taken
large enough so that the reductions of~$f$ and~$g$ modulo~$\fQ$, which
we denote by~$\tilde{f}, \tilde{g} \in k_\fQ[\tilde{C}]$, are
well-defined and satisfy
\[
  \deg\tilde{f}=\deg f
  \quad\text{and}\quad
  \deg\tilde{g}=\deg g.
\]
(Here~$k_\fQ$ denotes the residue field of~$\Ocal_{K(\zeta)}$ at~$\fQ$.)

In general, there may be a finite set of rational primes~$q$ such that
some point~$P \in \Supp(D_j)$ reduces modulo~$\fQ$ to a point not on
the affine curve~$C$.  If this happens, then
\[
  \deg(\widetilde{D_j}) < \deg(D_j).
\]
We wish to rule out this possibility.  For~$D_0$, which does not
depend on~$q$, it suffices to assume that~$q$ is sufficiently large. 
For~$D_j$, we compare the degree before and after reduction.

Let~$d = \deg(D_0)$ over~$K[C]$.  By part~(2) of the amenability hypothesis over
$K[C]$, we have
\[
  \deg(D_j) \le d = \deg(D_0).
\]
Further, since~$1-\zeta^j \in \fQ$, we see that
\begin{equation}
  \label{eqn:fzg-reduce}
  f-\zeta^j g\equiv f-g\MOD{\fQ}.
\end{equation}
Hence~$\widetilde{D_j} = \widetilde{D_0}$, and the degree of~$D_j$ is
$d$ both before and after reduction modulo~$\fQ$.

We now assume that~$\fq \in M$, so that~$\widetilde{D_0}\bmod\fq$ is
irreducible over~$k_\fq$.  Since~$K(\zeta)/K$ is totally ramified
at~$\fq$, the residue fields
\[
  k_\fQ = \Ocal_{K(\zeta)} / \fQ
  \quad\text{and}\quad
  k_\fq=\Ocal_K/\fq
  \quad\text{are equal,}
\]
and hence~$\widetilde{D_j} = \widetilde{D_0}$ is irreducible over this
finite field.  The degrees of $D_j$ and $\widetilde{D_j}$ being equal,
it follows that~$D_j$ is irreducible over~$K$, and so
$\div(L_q)$ is irreducible over~$K(\zeta)$.  Since we have excluded only a
finite number of primes, this proves the lemma.
\end{proof}

\begin{definition}
Let $K$ be a number field and $P_K$ its set of primes. 
The \emph{Dirichlet density} of a subset $M\subset P_K$
is defined as
$$d(M) = \lim_{s\downarrow 1} \frac{\sum_{\gp\in M} N(\gp)^{-s}}{\sum_{\gp\in P_K} N(\gp)^{-s}},$$
if that limit exists. 
We define the \emph{lower} Dirichlet density $d_-(M)$ by taking $\liminf$ instead of~$\lim$.
\end{definition}
We will relate densities
of sets of primes of $K$ and $\QQ$ as follows.
\begin{lemma}\label{lem:relatedensities}
Let $K$ be a number field and $M_K\subset P_K$. 
Let $M_{\QQ}=\{p\in P_{\QQ} \mid \exists \gp\in M: N(\gp)=p\}$. 
Then we have $$d_{-}(M_{\QQ})\geq \frac{d_{-}(M_{K})}{[K:\QQ ]} $$
\end{lemma}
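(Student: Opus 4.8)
The plan is to compare the two Dirichlet-series quotients directly, using the fact that a rational prime~$p$ with $N(\gp)=p$ for some $\gp\in M_K$ contributes at least one term $p^{-s}$ to the $\QQ$-side, while each prime $\gp$ of $K$ contributes $N(\gp)^{-s}$ to the $K$-side. First I would fix $\varepsilon>0$ and choose, for each large~$s>1$, the set $M_{\QQ}$ and observe that
\[
  \sum_{p\in M_{\QQ}} p^{-s} \;\ge\; \frac{1}{[K:\QQ]}\sum_{\gp\in M_K,\ \gp\text{ degree }1} N(\gp)^{-s},
\]
since each degree-one prime $\gp$ of $M_K$ has $N(\gp)=p$ for some $p\in M_{\QQ}$, and at most $[K:\QQ]$ primes of $K$ lie above a given~$p$. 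The primes of $M_K$ of degree $\ge 2$ contribute a term $N(\gp)^{-s}\le p^{-2s}$, and the sum of all such terms over \emph{all} primes of~$K$ is bounded by $[K:\QQ]\sum_p p^{-2s}$, which stays bounded as $s\downarrow 1$; hence it is negligible after dividing by $\sum_{\gp\in P_K}N(\gp)^{-s}\sim -\log(s-1)$. So up to a vanishing error the sum over $M_K$ may be replaced by the sum over its degree-one part.

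Next I would use the two standard asymptotics
\[
  \sum_{\gp\in P_K} N(\gp)^{-s} \sim \log\frac{1}{s-1}
  \quad\text{and}\quad
  \sum_{p\in P_{\QQ}} p^{-s} \sim \log\frac{1}{s-1}
  \qquad (s\downarrow 1),
\]
which hold because both are $\log\zeta_K(s)$, resp.\ $\log\zeta(s)$, up to a function bounded near $s=1$. Dividing the displayed inequality by $\sum_{p}p^{-s}$ and using that this is asymptotic to $\sum_{\gp}N(\gp)^{-s}$, I get
\[
  \frac{\sum_{p\in M_{\QQ}}p^{-s}}{\sum_{p\in P_{\QQ}}p^{-s}}
  \;\ge\; \frac{1}{[K:\QQ]}\cdot
  \frac{\sum_{\gp\in M_K}N(\gp)^{-s}}{\sum_{\gp\in P_K}N(\gp)^{-s}} - o(1).
\]
Taking $\liminf_{s\downarrow 1}$ of both sides and using $\liminf(A_s - o(1)) = \liminf A_s$ yields exactly $d_-(M_{\QQ})\ge d_-(M_K)/[K:\QQ]$.

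The only real point requiring care is the handling of the higher-degree primes and the replacement of denominators: one must make sure the error terms genuinely vanish relative to the (divergent) normalizing sum, rather than merely being bounded in absolute terms. That is the step I expect to be the main obstacle, though it is routine: it comes down to the elementary estimates $\sum_p p^{-2s}=O(1)$ and $\sum_{\gp}N(\gp)^{-s}\sim\sum_p p^{-s}\to\infty$ as $s\downarrow 1$. No transitivity or Chebotarev input is needed here; this is purely a comparison of two Dirichlet series under the norm map.
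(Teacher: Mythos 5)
Your proof is correct and takes essentially the same route as the paper's: discard the primes of degree $\ge 2$ (whose contribution remains bounded as $s\downarrow 1$), use that at most $[K:\QQ]$ primes of $K$ lie over a given rational prime, and compare both quotients against $\log\frac{1}{s-1}$. The only difference is cosmetic: the paper cites Neukirch for the routine analytic facts (dropping higher-degree primes and replacing the denominator by $\log\frac{1}{s-1}$) that you verify directly via $\sum_p p^{-2s}=O(1)$ and $\log\zeta_K(s)\sim\log\frac{1}{s-1}$.
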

\begin{proof}
It is shown in \cite[\S~13]{neukirch} that
the limit defining $d_{-}(M_{K})$
does not change if we remove from $M_K$ all primes of degree~$>1$,
and replace the denominator by $\log(1/(s-1))$. 
So assume without loss of generality
that $M_K$ contains only primes of degree~$1$. 
For every element of $M_{\QQ}$, there are at
most $[K :\QQ ]$ elements of $M_{K}$, hence
we get $$d_{-}(M_{\QQ})=
\lim_{s\downarrow 1} \frac{\sum_{p\in M_{\QQ}} p^{-s}}{\log\frac{1}{s-1}}
\geq \frac{1}{[K :\QQ ]}
\lim_{s\downarrow 1} \frac{\sum_{\gp\in M_{K}} N(\gp)^{-s}}{\log\frac{1}{s-1}}
= \frac{d_{-}(M_{K})}{[K :\QQ ]}.$$
\end{proof}
We will need the following easy consequence of the Chebotarev
Density Theorem.
\begin{lemma}
\label{lem:EDSchebotarev}
Let~$D$ be a divisor of prime degree defined over~$K$ such that~$D$ is
irreducible over~$K$. Then there is a set~$T$ of primes of~$K$ of
positive density such that~$\widetilde{D}\bmod\fq$ is irreducible over
$k_\fq$ for all~$\fq\in T$.
\end{lemma}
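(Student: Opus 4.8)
The plan is to deduce the lemma from the Chebotarev Density Theorem, via the classical fact that a field extension of prime degree has a positive density of inert primes.

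Since $D$ is irreducible over $K$ it is in particular semi-reduced, so $D=\sum_{Q\in\Supp(D)}(Q)$, and $\Gal(\Kbar/K)$ acts transitively on $\Supp(D)$. Hence $\Supp(D)=\{Q_1,\dots,Q_p\}$ is a single Galois orbit, where $p:=\deg(D)$ is prime. Let $L=K(Q_1)\subseteq\Kbar$ be the minimal field of definition of $Q_1$; then $[L:K]=\#\Supp(D)=p$, and $D$, viewed as a closed point of $C/K$, has residue field $L$. Spreading out over $\Spec\Ocal_K$ and discarding a finite set of primes, we may assume that $C$ has a smooth proper model with smooth fibres and that the closure of $D$ in this model is a horizontal prime divisor, finite and \'etale over the base, which as a scheme over $\Spec\Ocal_K$ is $\Spec\Ocal_L$. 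For $\fq$ outside the discarded set, the reduction $\widetilde D\bmod\fq$ is then the closed fibre $\Spec(\Ocal_L/\fq\Ocal_L)$, regarded as an effective divisor on $\widetilde C$; consequently $\widetilde D\bmod\fq$ is irreducible over $k_\fq$ if and only if $\Ocal_L/\fq\Ocal_L$ is a field of degree $p$ over $k_\fq$, i.e.\ if and only if $\fq$ is inert in $L/K$.

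It therefore suffices to prove that the set of primes of $K$ inert in $L/K$ has positive density, since removing the finitely many discarded primes does not affect the density. Let $N/K$ be the Galois closure of $L/K$ and put $G=\Gal(N/K)$. Then $G$ acts faithfully and transitively on $\{Q_1,\dots,Q_p\}$, so $G$ is isomorphic to a transitive subgroup of the symmetric group $S_p$. Since $p\mid\#G$, Cauchy's theorem supplies an element $\sigma\in G$ of order $p$, and because $p$ is prime $\sigma$ is necessarily a $p$-cycle in $S_p$. Thus the set $\Sigma\subseteq G$ of $p$-cycles is a nonempty union of conjugacy classes, and by Chebotarev's theorem applied to $N/K$ the set of primes $\fq$ of $K$, unramified in $N$, with $\operatorname{Frob}_\fq\subseteq\Sigma$ has density $\#\Sigma/\#G>0$. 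For such a $\fq$, the decomposition group at any prime of $N$ above $\fq$ is generated by a $p$-cycle acting on the $p$ cosets of $\Gal(N/L)$, so $\fq$ has a unique prime above it in $L$, of residue degree $p$; that is, $\fq$ is inert in $L/K$. Taking $T$ to be this set of inert primes, with the finitely many discarded primes removed, completes the argument.

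The only step that is not purely formal is the identification, in the setup above, of ``the reduction of the divisor $D$ modulo $\fq$'' with ``the reduction of the ring extension $\Ocal_L/\Ocal_K$ modulo $\fq$''. I would carry this out either scheme-theoretically---spreading the $L$-point $Q_1$ of $C$ out to an $\Ocal_L$-point of the model and comparing Zariski closures---or concretely, by choosing $t\in K(C)$ that is regular at $Q_1$ with $K\bigl(t(Q_1)\bigr)=L$ and comparing $\widetilde D\bmod\fq$ with the divisor cut out by the reduction of the minimal polynomial of $t(Q_1)$ over $K$; either way, the passage from a generic statement to one valid for all but finitely many $\fq$ is routine. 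Granting this, the lemma is exactly the elementary group theory of transitive subgroups of $S_p$ combined with the Chebotarev Density Theorem.
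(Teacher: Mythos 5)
Your argument is correct and is essentially the paper's own proof: both pass to the Galois closure of $K(Q)$ for $Q\in\Supp(D)$, use $p\mid\#G$ plus Cauchy's theorem and the faithful action on the $p$-point support to produce a $p$-cycle, and then apply Chebotarev to get a positive-density set of primes whose Frobenius permutes the (reduced) support in a single cycle. Your reformulation via primes inert in $K(Q)/K$, together with the spreading-out discussion, is just a more explicit version of the paper's ``exclude finitely many primes of bad reduction and let Frobenius act as a $p$-cycle on the reduction of $D$'' step.
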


\begin{proof}
Let~$p=\deg(D)$, which by assumption is prime.  By excluding a finite
set of primes, we may suppose that~$C$ has good reduction at every
$\fq$ under consideration.  Let~$L/K$ be the Galois extension of~$K$
generated by the points in the support of~$D$. If~$Q\in \Supp(D)$ is
any point, then the irreducibility of~$D$ over~$K$ implies that
$[K(Q):K]=p$, so~$p\mid \#\Gal(L/K)$.  It follows that the set
$X\subseteq\Gal(L/K)$ of elements acting as a~$p$-cycle on the support
of~$D$ is non-empty, and this set is conjugacy-invariant.  By the
Chebotarev Density Theorem (\cite[Theorem~13.4]{neukirch}),
there is a set of primes~$T$ of~$K$ of
density~$\#X/[L:K]$ such that for~$\fQ\mid \fq\in T$, the Frobenius
element of~$\Gal(k_\fQ/k_\fq)$ acts as a~$p$-cycle on the support of
the reduction of~$D$ modulo~$\fQ$.  In particular, for these~$\fq$ the
reduction of~$D$ modulo~$\fq$ is irreducible over~$k_\fq$.
\end{proof}

We now have the tools needed to prove that amenable Lucas sequences
over~$K[C]$ contain a significant number of irreducible terms.

\begin{proof}[Proof of Theorem~$\ref{th:mainmultresult}$]
Write $L_n = (f^n - g^n)/(f-g)$. Assume first $f,g\in K[C]$. 
By Lemma~\ref{lem:relatedensities}
it suffices to prove that the set~$S$
of Lemma~\ref{lem:multlemma} has positive lower density. Since the
set~$S'$ in Lemma~\ref{lem:multlemma} is finite, it suffices to prove
that the set~$M$ in Lemma~\ref{lem:multlemma} has positive lower
density.  But this follows from the amenability assumption and
Lemma~\ref{lem:EDSchebotarev},
which finishes the proof in case $f,g\in K[C]$.

In general, let $c:C'\rightarrow C$ be as in the definition of amenable. Then
We find that there is a set of primes $q$ of positive lower density
such that $c^* \div(L_q) = \div(L_q\circ c)\in \Div[C'](K)$ is irreducible. 
This implies that $\div(L_q)\in \Div[C](K)$ is irreducible as well.
\end{proof}

\section{Proof of Theorem~\ref{th:mainEDSresult}---Irreducible Terms in EDS}
\label{section:irredtermsineds}

Recall that Theorem~\ref{th:mainEDSresult} assumes that the elliptic
curve $E$ is defined over~$K$. 
We postpone the general
definition of the minimal proper regular model to Section~\ref{section:primitivedivisors},
and for now claim that if $E$ is defined over $K$, then
its minimal proper regular model is $\Ecal= E\times C$. 
Note that a point $Q\in E(K(C))$ induces a map $C\rightarrow E$
that by abuse of notation we denote
by~$\sigma_Q$. The map
$\sigma_Q:C\rightarrow \Ecal$ from the introduction
is now given by $\sigma_Q = (\sigma_Q \times \mathrm{id}_C)$. 
As a consequence, the EDS associated to $P$ is simply given
by $$D_{nP} = \sigma^*_{nP}(\ptinf)\in\mathrm{Div}(C),\quad n\geq 1,$$
and we will not use $\Ecal$
in this section.

The proof of Theorem~\ref{th:mainEDSresult} proceeds along similar
lines to the proof of Theorem~\ref{th:mainmultresult}, but the proof
is complicated by the fact that there are no totally ramified primes,
so we must use another argument to find appropriate primes of
degree~$1$.  We begin with the key lemma, which is used in place of
the fact that~$q^{\text{th}}$-roots of unity generate totally ramified
extensions.

\begin{lemma}
\label{lemma:deg1prime}
Let~$E/K$ be an elliptic curve defined over a number field, and assume
that~$E$ does not have CM. Then for all prime ideals~$\gp$
of~$K$ such that~$p=N_{K/\QQ}(\gp)$ is prime and sufficiently large and such
that~$E$ has ordinary reduction at~$\gp$, and for all
points~\text{$Q\in E[p]$}, there exists a degree~$1$ prime ideal~$\fP
\mid \gp$ of the field~$K(Q)$ such that
\text{$Q\equiv \ptinf\MOD{\fP}$}.
\end{lemma}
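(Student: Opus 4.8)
The plan is to use the local structure of $E[p]$ at a prime of good ordinary reduction. Fix an embedding $\Kbar\hookrightarrow\overline{K_\gp}$, where $K_\gp$ is the completion of $K$ at $\gp$; this singles out a decomposition subgroup $G_{K_\gp}=\Gal(\overline{K_\gp}/K_\gp)$ of $G_K=\Gal(\Kbar/K)$, with inertia subgroup $I_\gp$, together with a reduction map $\rho\colon E(\overline{K_\gp})\to\widetilde{E}(\overline{\FF}_p)$. Since $E$ has good ordinary reduction at $\gp$, the subgroup $C:=(\ker\rho)\cap E[p]$ --- the group of $\overline{K_\gp}$-points of the connected part of the finite flat group scheme $E[p]$ over $\Ocal_{K_\gp}$ --- is cyclic of order $p$ and stable under $G_{K_\gp}$. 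By the theory of ordinary reduction (the connected--\'etale sequence of $E[p^\infty]$ and Tate's classification of $1$-dimensional height-$1$ $p$-divisible groups over $\ZZ_p$), $I_\gp$ acts on $C$ through the mod-$p$ cyclotomic character. For $p$ sufficiently large $p$ is unramified in $K$, so (as $\gp$ has residue degree $1$) $K_\gp=\QQ_p$ and the mod-$p$ cyclotomic character maps $I_\gp$ onto $\FF_p^{\times}$. Hence $I_\gp$ acts transitively on $C\setminus\{\ptinf\}$, and for every $R\in C\setminus\{\ptinf\}$ the extension $K_\gp(R)/K_\gp$ is totally tamely ramified of degree dividing $p-1$.

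Next I would identify which $p$-torsion points reduce to $\ptinf$. A point $S\in E[p]$ reduces to $\ptinf$ at the chosen place exactly when $S\in C$; if one replaces the embedding $\Kbar\hookrightarrow\overline{K_\gp}$ by the conjugate embedding coming from $g\in G_K$, the corresponding subgroup is $gC$. Since $C$ depends only on the $G_K$-module $E[p]$, the subgroups $gC$ run over a single $\Gal\bigl(K(E[p])/K\bigr)$-orbit among the $p+1$ one-dimensional $\FF_p$-subspaces of $E[p]\cong\FF_p^2$. This is where the non-CM hypothesis enters: by Serre's open image theorem, $\Gal\bigl(K(E[p])/K\bigr)\supseteq\operatorname{SL}_2(\FF_p)$ for all sufficiently large $p$, and $\operatorname{SL}_2(\FF_p)$ acts transitively on $\PP^1(\FF_p)$. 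Therefore every cyclic subgroup of order $p$ of $E[p]$ occurs as some $gC$, and in particular $\bigcup_{g\in G_K}gC=E[p]$.

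To conclude, let $Q\in E[p]$ be given. If $Q=\ptinf$, take $\fP=\gp$. Otherwise pick $g\in G_K$ with $Q\in gC$; after replacing the chosen embedding by its $g$-conjugate we may assume $Q\in C$. Let $\fP$ be the prime of $K(Q)$ lying below the place of $\Kbar$ picked out by this embedding. By construction $Q$ reduces to $\ptinf$ modulo $\fP$, so $Q\equiv\ptinf\MOD{\fP}$. Moreover the completion $K(Q)_{\fP}$ equals $K_\gp(Q)$ (with $Q\in C\subseteq E(\overline{K_\gp})$), which by the first paragraph is totally ramified over $K_\gp$; hence $e(\fP/\gp)=[K_\gp(Q):K_\gp]$ and $f(\fP/\gp)=1$. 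Since $N_{K/\QQ}(\gp)=p$, this gives $N_{K(Q)/\QQ}(\fP)=p$, so $\fP$ is a degree-$1$ prime with the required property.

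The main obstacle is the first paragraph --- the assertion that at a degree-$1$ prime of good ordinary reduction the kernel of reduction inside $E[p]$ is cyclic of order $p$ with inertia acting via the cyclotomic character. This is standard $p$-adic structure theory for ordinary elliptic curves (the connected--\'etale sequence of $E[p^\infty]$ over $\Ocal_{K_\gp}$, together with Tate's theorem on $1$-dimensional height-$1$ $p$-divisible groups over $\ZZ_p$), and it is precisely where the ``ordinary reduction'' and ``residue degree one'' hypotheses are used. The non-CM hypothesis is needed only through Serre's theorem in the second paragraph, to make the mod-$p$ Galois image large enough to rotate the canonical subgroup $C$ onto every line of $E[p]$.
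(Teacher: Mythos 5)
Your proof is correct and takes essentially the same route as the paper's: both use Serre's open image theorem (via the non-CM hypothesis) to move a $p$-torsion point in the kernel of reduction onto the given point $Q$ by a global Galois element, and both use the local structure of good ordinary reduction at a degree-one prime (inertia acting on the order-$p$ kernel of reduction through the full mod-$p$ cyclotomic character) to force the resulting prime of $K(Q)$ to have residue degree one. The only difference is packaging: you express the local step as total ramification of $K_{\gp}(Q)/K_{\gp}$ via the connected--\'etale sequence and Tate's classification, whereas the paper expresses the same computation inside $\Gal\bigl(K(E[p])/K\bigr)$ as the inclusion $D_{\gP}\subseteq I_{\gP}\Gal(L/L')$ using Serre's Borel/inertia description.
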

\begin{proof}
Given~$E/K$, for all sufficiently large primes~$p$, the
following conditions hold:
\begin{itemize}
\setlength{\itemsep}{0pt}
\item $p$ is unramified in~$K$,
\item
$E$ has good reduction at all primes lying over~$p$, and
\item
the Galois group~$\Gal\left(K\bigl(E[p]\bigr)/K\right)$ acts
transitively on~$E[p]$. 
\end{itemize}
It is clear that the first two conditions
eliminate only finitely many
primes, and Serre's theorem~\cite{MR0387283} says that the
same is true for the third, since we have assumed
that~$E$ does not have~CM. 
\par
Let $\gp$ and $Q$ be as in the lemma. 
To ease notation, let~$L=K\bigl(E[p]\bigr)$ and~$L'=K(Q)$. 
Let~$\gP_0$ be a prime of~$L$ lying over~$\gp$. The
reduction-mod-$\gP_0$ map is not injective
on~$p$-torsion~\cite[III.6.4]{MR2514094}, so we can find a nonzero
point~\text{$Q_0\in E[p]$} such that \text{$Q_0\equiv \ptinf\MOD{\gP_0}$}. 
Since~$\Gal(L/K)$ acts transitively on~$E[p]$, we can find
a~\text{$g\in\Gal(L/K)$} such that~$g(Q_0)=Q$. Then setting
\[
  \gP=g(\gP_0),\quad \text{and}\quad \gP'=\gP\cap L',
\]
we have 
\[
  \gp=\gP'\cap K,
  \quad \text{and} \quad  Q\equiv \ptinf\pmod{\gP'}.
\]
For the convenience of the reader, the following display shows the
fields and primes that we are using:
\[
  \begin{array}{c@{\qquad}c}
   L=K\bigl(E[p]\bigr) & \fP \\
   \big| & \big| \\
   L'=K(Q) & \fP' \\
   \big| & \big| \\
   K & \fp \\
   \big| & \big| \\
  \QQ & p \\
  \end{array}
\]
It remains to prove that~$\gP'$ is a prime of degree~$1$.  

Since we have assumed that~$\gp$ has degree~$1$ over~$\QQ$, it suffices to
prove that the extension of residue fields~$k_{\gP'}/k_{\gp}$ is
trivial. This is done using ramification theory.  We denote by
$D_{\gP}$ and~$I_{\gP}$, respectively, the decomposition group and the
inertia group of~$\gP$.  
The degree of~$\gP'$ is~$1$ exactly when
$D_{\gP}\subset I_{\gP}\Gal\left( L/L'\right)$.
We prove this inclusion of sets using Serre's results~\cite{MR0387283}
that describe the~$\Gal\left(L_{\gP}/K_{\mathfrak{p}}\right)$-module structure of
$E[p]$, where~$L_{\gP}$,~$L'_{\gP'}$, and~$K_{\mathfrak{p}}$ denote the completions
of~$L$,~$L'$, and~$K$, respectively.

Let 
\[
  \rho_{p}: \Gal\left( L/K\right)\longrightarrow\GL\left(E[p]\right)
\]
be the Galois representation associated to~$E[p]$. 
Recall that $E$ has ordinary reduction at~$\gp$
and that $p$ is unramified in~$K/\QQ$,
so Serre~\cite[\S$1.11$]{MR0387283} shows the
existence of a basis~$\left(Q_{1},Q_{2}\right)$ of~$E[p]$ with
$Q_{1}\equiv \ptinf\pmod{\gP'}$, and such that under the isomorphism
$\GL\left(E[p]\right)\cong\GL_{2}\left(\mathbb{F}_{p}\right)$
associated to the basis~$\left(Q_{1},Q_{2}\right)$, the following two
facts are true:
\begin{itemize}
\item 
The image of~$D_{\gP}$ under~$\rho_{p}$ is contained in the Borel
subgroup~$\left\{\left(\begin{smallmatrix}
  *&*\\0&*\\\end{smallmatrix}\right)\right\}$ of
 ~$\GL_{2}\left(\mathbb{F}_{p}\right)$.
\item 
The image of~$I_{\gP}$ under~$\rho_{p}$ contains the subgroup
~$\left\{\left(\begin{smallmatrix} *&0\\0&1\\\end{smallmatrix}\right)\right\}$
of order \text{$p-1$}.
\end{itemize}

Under our assumption that~$E$ has ordinary reduction, the kernel of
reduction modulo~$\gP$ is cyclic of order~$p$, so~$Q_1$ is a 
multiple of the point~$Q$. Hence $\Gal(L/L')$ is the subgroup of~$\Gal(L/K)$ consisting in automorphisms acting trivially on $Q_1$. Since $\Gal (L/K)$ acts transitively on~$E[p]$, the image of~$\rho_{p}$ satisfies
\[
  \rho_{p}\bigl(\Gal(L/L')\bigr)
  =\left\{\begin{pmatrix} 1&*\\0&*\\ \end{pmatrix} \right\}
  \subset \GL_{2}\left(\mathbb{F}_{p}\right).
\]
In particular,~$\rho_{p}\bigl(\Gal(L/L')\bigr)$ has order~$p(p-1)$. 
It follows that
\[
  \rho_{p}\left(D_{\gP}\right)
  \subset\rho_{p}\bigl( I_{\gP}\Gal\left(L/L'\right)\bigr).
\]
\end{proof}

We next use Lemma~\ref{lemma:deg1prime} to prove an elliptic curve
analogue of Lemma~\ref{lem:multlemma}.

\begin{lemma}
\label{lem:EDSlemmaM_P}
Let~$E$,~$P$, and~$K$ be as in the statement of
Theorem~$\ref{th:mainEDSresult}$, and define sets of primes
\begin{align*}
  U_E &= \left\{\fq\subset \Ocal_K\text{ prime } :
  \begin{tabular}{@{}l@{}}
   ~$N_{K/\QQ}(\gq)$ is prime, i.e.,~$\gq$ has degree~$1$, \\
    and~$E$ has good ordinary reduction at~$\fq$\\
  \end{tabular}
  \right\}, \\*
  S_P &= \bigl\{ \gq\in U_E : \text{$D_{qP}-D_P$ is irreducible over~$K$,
          where~$q=N_{K/\QQ}(\gq)$} 
  \bigr\}, \\*
  M_P &=  \bigl\{ \gq\in U_E : 
    \text{$D_P$ modulo~$\fq$ is irreducible over~$\Ocal_K/\fq$}
  \bigr\}.
\end{align*}
Then there is a finite set~$S'$ of primes of~$\Ocal_K$ such
that 
\[
  M_P\subseteq S_P\cup S'.
\]
\end{lemma}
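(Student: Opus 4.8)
The plan is to mimic the proof of Lemma~\ref{lem:multlemma} as closely as possible, with Lemma~\ref{lemma:deg1prime} playing the role that total ramification of $\QQ(\zeta)/\QQ$ played in the multiplicative case. Fix $\gq\in M_P$, let $q=N_{K/\QQ}(\gq)$ be the underlying rational prime (so $\gq$ has degree $1$), and suppose $q$ is large enough to avoid all the finitely many bad primes below. The key observation is that $D_{qP}-D_P$ is, up to its multiplicity-free support, the divisor of intersection of the section $\sigma_P:C\to E\times C$ with the fibers over the nonzero $q$-torsion points of $E$; concretely, if $T_1,\dots,T_{q^2-1}$ are the nonzero points of $E[q]$, then $D_{qP}-D_P = \sum_j \sigma_P^*(T_j)$ (after checking that these pullbacks have disjoint support, which follows because $P$ is nonconstant and two distinct torsion points cannot both be hit at the same place of $C$). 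Since $E$ is non-CM, Serre's theorem gives that $\Gal(K(E[q])/K)$ acts transitively on $E[q]\setminus\{\ptinf\}$ for all large $q$, hence $\Gal(\Kbar/K)$ permutes the divisors $\sigma_P^*(T_j)$ transitively. Therefore $D_{qP}-D_P$ is irreducible over $K$ as soon as a single $\sigma_P^*(T_j)$ is irreducible over $K(T_j)$.

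First I would reduce to showing that $\sigma_P^*(Q)$ is irreducible over $K(Q)$ for one well-chosen $Q\in E[q]$. Here is where Lemma~\ref{lemma:deg1prime} enters: given $\gq\in U_E$ with $q$ prime and large, and given the point $Q\in E[q]$, there is a degree-$1$ prime $\fP\mid\gq$ of $K(Q)$ with $Q\equiv \ptinf\pmod{\fP}$. The point of the congruence $Q\equiv\ptinf$ is that it forces the reduction of $\sigma_P^*(Q)$ modulo $\fP$ to coincide with the reduction of $\sigma_P^*(\ptinf)=D_P$ modulo $\gq$: indeed the section $\sigma_P$ reduces mod $\fP$ to the reduction of $P$ over the residue field, and a place of $\widetilde C$ lies in the support of $\widetilde{\sigma_P^*(Q)}$ exactly when $\widetilde P$ meets the fiber over $\widetilde Q=\widetilde\ptinf$ there, i.e.\ exactly when it lies in $\widetilde{D_P}$. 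One must also check that no place in $\Supp(\sigma_P^*(Q))$ escapes to the bad locus on reduction and that no degree drops — but $\deg \sigma_P^*(Q)=\deg D_P$ for all $Q\in E[q]$ (both equal the intersection number of $\sigma_P(C)$ with a fiber of $E\times C\to C$ pulled back appropriately, independent of $Q$), and finitely many primes $q$ can be excluded to handle escape to infinity, exactly as in Lemma~\ref{lem:multlemma}.

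Now put $k=\Ocal_K/\gq$; since $\fP$ has degree $1$ over $\gq$ its residue field also equals $k$. By the assumption $\gq\in M_P$, the reduction $\widetilde{D_P}$ is irreducible over $k$, hence so is $\widetilde{\sigma_P^*(Q)}$, and since this reduced divisor has the same degree as $\sigma_P^*(Q)$, the divisor $\sigma_P^*(Q)$ is itself irreducible over $K(Q)$ (an irreducible factor would reduce to a proper nonzero subdivisor of the same support). By the transitivity discussed above, $D_{qP}-D_P$ is irreducible over $K$, so $\gq\in S_P$. We have excluded only the finitely many $\gq$ lying over small primes $q$, over primes ramified in $K$, over primes of bad reduction for $E$ or for $C$, over primes where the Serre-transitivity or Lemma~\ref{lemma:deg1prime} hypotheses fail, and over the finite set where support escapes to infinity; collecting these into $S'$ gives $M_P\subseteq S_P\cup S'$.

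The step I expect to be the main obstacle is the clean identification of $D_{qP}-D_P$ with $\sum_j \sigma_P^*(T_j)$ together with the disjointness of the supports $\sigma_P^*(T_j)$: this uses that $P$ is nonconstant (condition (iii)) in an essential way, since for a constant point all the $\sigma_P^*(T_j)$ would be empty or coincide, and it requires being careful that the divisor $D_{qP}$ really does decompose this way over $\Ecal=E\times C$ rather than only up to a correction supported on the singular fibers — which is why one wants $C$ and $E$ to have good reduction and why a finite exceptional set is unavoidable. Once that geometric bookkeeping is in place, the rest is a faithful transcription of the argument for Lemma~\ref{lem:multlemma}, with the pair $(K(Q),\fP)$ from Lemma~\ref{lemma:deg1prime} substituting for the pair $(K(\zeta),\fQ)$.
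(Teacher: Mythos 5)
Your proposal is correct and takes essentially the same route as the paper's proof: the decomposition $D_{qP}-D_P=\sum_{Q\in E[q]\smallsetminus\{\ptinf\}}\s_P^*(Q)$, Serre transitivity to reduce to the irreducibility of a single $\s_P^*(Q)$ over $K(Q)$, and Lemma~\ref{lemma:deg1prime} to produce a degree-$1$ prime above $\fq$ with $Q\equiv\ptinf$, so that irreducibility of $D_P$ modulo $\fq$ gives irreducibility of the reduction and hence of $\s_P^*(Q)$. The extra bookkeeping you flag (disjointness of supports, degree preservation, escape to infinity) is automatic here because $C$ is projective and the divisor identity follows from $\s_{qP}=[q]\circ\s_P$, so the paper's argument simply omits it.
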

\begin{proof}
The point~$P\in E\bigl(K(C)\bigr)$ induces a map~$\s_P:C\to E$, and
our assumption that~$P$ is not a constant point, i.e.,~$P\notin E(K)$,
implies that~$\s_P$ is a finite covering. For any rational prime~$q$,
we have
\begin{equation}
  \label{eqn:DqPDPsum}
  D_{qP}-D_P 
  = \s_{qP}^*(\ptinf) - \s_P^*(\ptinf) 
  = \sum_{Q\in E[q]\smallsetminus\{\ptinf\}} \s_P^*(Q).
\end{equation}
\par
As noted in the proof of
Lemma~\ref{lemma:deg1prime}, if~$q$ is sufficiently large, then~$\Gal(\Kbar/K)$ acts
transitively on~$E[q]\smallsetminus\{\ptinf\}$. Thus~$\Gal(\Kbar/K)$ acts
transitively on the summands in the right side of~\eqref{eqn:DqPDPsum}, so in order to
prove that~\text{$D_{qP}-D_P$} is irreducible over~$K$, it suffices to
take a nonzero point~$Q\in E[q]$ and show that~$\s_P^*(Q)$ is
irreducible over~$L':=K(Q)$. 
\par
Let~$\gq\in M_P$, so in particular~$\gq$ has degree~$1$, and let
$q=N_{K/\QQ}(\gq)$. We want to show that~$\gq\in S_P$ (if~$q$ is
sufficiently large.) We will do this by finding a prime~$\gQ$ in~$L'$
such that~$\s_P^*(Q)\bmod\gQ$ is irreducible over the finite
field~$\Ocal_{L'}/\gQ$.  (This suffices, since the reduction
modulo~$\gQ$ of a reducible divisor is clearly reducible.)
\par
Lemma~\ref{lemma:deg1prime} says that if~$q$ is
sufficiently large, then there is a prime~$\gQ$ in~$L'$ of degree~$1$
over~$\gq$ such that~$Q\equiv \ptinf\MOD{\gQ}$. Thus
\[
  \s_P^*(Q) \equiv \s_P^*(\ptinf) \pmod{\gQ},
\]
so it suffices to prove that~$\s_P^*(\ptinf)\bmod\gQ$ is irreducible
over~$\Ocal_{L'}/\gQ$. 
\par
We have assumed that~$\gp\in M_P$, so by the definition of~$M_P$, we
know that~$D_P\bmod\gq$ is irreducible over the finite
field~$\Ocal_K/\gq$. Since further~$\gQ$ has degree~$1$ over~$\gq$,
this implies that~$D_P\bmod\gQ$ is irreducible over~$\Ocal_{L'}/\gQ$,
which completes the proof of the lemma.
\end{proof}

We now have the tools to complete the proof.

\begin{proof}[Proof of Theorem~$\ref{th:mainEDSresult}$]
We continue with the notation in the statement of
Lemma~\ref{lem:EDSlemmaM_P}.  We recall
from Lemma~\ref{lem:relatedensities} that if~$T$ is a set of primes
of~$K$ having positive lower density, then the set of rational primes
divisible by elements of~$T$ has positive lower density in the primes
of~$\QQ$.  So in order to prove Theorem~\ref{th:mainEDSresult}, it
suffices to prove that the set~$S_P$ has positive lower density. Since
the set~$S'$ in Lemma~\ref{lem:EDSlemmaM_P} is finite, it suffices to
prove that the set~$M_P$ in Lemma~\ref{lem:EDSlemmaM_P} has positive
lower density. 
\par
We are assuming that the divisor~$D_P$ is irreducible over~$K$ and has
prime degree. 
By Lemma~\ref{lem:EDSchebotarev},
the divisor
$D_P$ modulo~$\gq$ is irreducible for a set of primes of positive
density;
and since
the primes where~$E$ has supersingular reduction have density
zero~\cite{MR1144318,MR644559,MR1484415}, the same is true if we
restrict to primes where~$E$ has ordinary reduction.  This proves
that~$M_P$ has positive lower density, which completes the proof of
Theorem~$\ref{th:mainEDSresult}$.
\end{proof}

If~$D_P$ is reducible, then the conclusion of
Theorem~\ref{th:mainEDSresult} may be false. 
A counterexample is the case that~$C$ is an elliptic curve and the section $\sigma_P: C \rightarrow E$ is an isogeny of degree at least~$2$. 
Notice that in this case, the divisor~$D_P$ is never irreducible,
because its support contains~$\ptinf_C$, the zero point of~$C$. 
The same holds for $D_{qP}-D_{P}$, as its support
contains $\sigma^*_P(\ptinf_E)$. 
However, if we remove this divisor $\sigma^*_P(\ptinf_E)$,
then under a mild hypothesis,
we can prove that the remaining divisor $\sigma^*_P[q]^*(\ptinf_E)-\sigma^*_P(\ptinf_E)$
\emph{is} irreducible for \emph{almost all} primes~$q$,
not just a positive density. This is the following theorem.

\begin{theorem}
\label{thm:isogenyEDS}
We continue with the notation of the statement and proof of
Theorem~$\ref{th:mainEDSresult}$. Suppose that~$C$ is an elliptic curve
isogenous to~$E$ and \text{$\sigma_P: C \rightarrow E$} is an isogeny of
degree~$d > 1$.  Further, assume that~$\Gal(\Kbar/K)$ acts
transitively on~$\ker\left(\sigma_{P} \right)\smallsetminus \{ \ptinf_E
\} .$ Then for all sufficiently large rational primes~$q$, the divisor
$D_{qP}-D_P$ is a sum of exactly two irreducible divisors,
one of degree $(d-1)(q^2-1)$ and one of degree $q^2-1$.
\end{theorem}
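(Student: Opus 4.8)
The plan is to mimic the proof of Theorem~\ref{th:mainEDSresult} but to exploit the extra structure coming from the isogeny $\sigma_P\colon C\to E$. First I would decompose the divisor in a way that separates the ``trivial'' part. Since $\sigma_P$ is an isogeny of degree $d$, the group $C[q]$ sits inside the preimage $\sigma_P^{-1}(E[q])$; more precisely, because $q$ is coprime to $d$ for all large $q$, multiplication by $q$ on $C$ and the isogeny are compatible, so $\sigma_P^{-1}(E[q]) = \ker(\sigma_P) + C[q]$ as a set, a union of $d$ cosets of $C[q]$. Pulling back $\ptinf_E$ along $\sigma_{qP} = \sigma_P\circ[q]_C$ and using $\sigma_{qP}^*(\ptinf_E) = [q]_C^*\sigma_P^*(\ptinf_E) = [q]_C^*\bigl(\ker(\sigma_P)\bigr)$, the divisor $D_{qP}-D_P$ becomes the sum over the nonzero points $R\in \sigma_P^{-1}(E[q])\smallsetminus\ker(\sigma_P)$ of $\sigma_P^*(\ptinf_E)$ pulled back suitably — concretely it is
\[
  D_{qP}-D_P = \sum_{R\in \sigma_P^{-1}(\ptinf_E)} \Bigl(\sum_{S\in C[q]\smallsetminus\{\ptinf_C\}} (R+S)\Bigr)
    \;+\; \sum_{S\in C[q]\smallsetminus\{\ptinf_C\}} (S),
\]
after removing the term $\sigma_P^*(\ptinf_E)$ that lies in both $D_{qP}$ and $D_P$. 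So the natural guess is that the two irreducible pieces are $\mathcal{D}_1 := \sum_{R\ne \ptinf_C,\, S}(R+S)$ (of degree $(d-1)(q^2-1)$) and $\mathcal{D}_2 := \sum_{S\ne\ptinf_C}(S)$ (of degree $q^2-1$), and the whole task is to show each is irreducible over $K$ for all large $q$.

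For $\mathcal{D}_2$: this is exactly the divisor of $q$-torsion points of $C$, and since $C$ is an elliptic curve over $K$ (isogenous to the non-CM curve $E$, hence itself non-CM) with $\Gal(\Kbar/K)$ acting transitively on $C[q]\smallsetminus\{\ptinf_C\}$ for all large $q$ by Serre's theorem, $\mathcal{D}_2$ is irreducible over $K$ — this is the easy half. For $\mathcal{D}_1$: I would apply the machinery of Lemma~\ref{lem:EDSlemmaM_P} and Lemma~\ref{lemma:deg1prime}. The Galois group $\Gal(\Kbar/K)$ acts on the index set $\{(R,S) : R\in\ker(\sigma_P)\smallsetminus\{\ptinf_C\},\ S\in C[q]\smallsetminus\{\ptinf_C\}\}$; by the hypothesis that $\Gal(\Kbar/K)$ acts transitively on $\ker(\sigma_P)\smallsetminus\{\ptinf_C\}$ together with transitivity on $C[q]\smallsetminus\{\ptinf_C\}$, and using that the field $K(\ker\sigma_P)$ has degree prime to $q$ (it is fixed, independent of $q$), one checks that $\Gal(\Kbar/K)$ acts transitively on the pairs $(R,S)$ for all large $q$. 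So it suffices to fix one point $R_0+S_0$ and show the single-point divisor (the point of $C$ mapping to it — really $\sigma_P^{-1}$ of a point) is irreducible over $K(R_0+S_0)$. Following Lemma~\ref{lem:EDSlemmaM_P}, I reduce modulo a cleverly chosen degree-one prime. Lemma~\ref{lemma:deg1prime}, applied to the point $S_0\in C[q]$ (which reduces to $\ptinf_C$ modulo a suitable degree-one prime $\mathfrak{Q}$), lets us replace $R_0+S_0$ by $R_0$ modulo $\mathfrak{Q}$; then the divisor in question reduces to a sub-divisor of $\sigma_P^*(\ptinf_E)$, and irreducibility over the residue field follows provided $\sigma_P^*(\ptinf_E)$ has irreducible reduction — which holds for a positive density of primes by Chebotarev.

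There is a subtlety, however: the density argument in Theorem~\ref{th:mainEDSresult} only gives positive density, whereas here we want \emph{all} sufficiently large $q$. The point is that the degrees now force the issue: $\mathcal{D}_1$ has degree $(d-1)(q^2-1)$ and, crucially, the Galois orbit of any point in its support has size divisible by the orbit size on $C[q]\smallsetminus\{\ptinf_C\}$, which is $q^2-1$ for all large $q$ by Serre's theorem (no density restriction needed — this is where non-CM of $C$ is used). Since $C$ is an elliptic curve, the map $\sigma_P^*$ is unramified away from finitely many points, so for large $q$ every point of $\Supp(\mathcal{D}_1)$ is a point where $\sigma_P$ is étale, and the support decomposes into $d-1$ Galois orbits each of size $q^2-1$ a priori — but the extra transitivity hypothesis on $\ker(\sigma_P)$ glues these into a single orbit. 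The main obstacle I anticipate is precisely this bookkeeping: proving that $\Gal(\Kbar/K)$ acts transitively on the pairs $(R,S)$, i.e., that the extension $K(C[q])/K$ and $K(\ker\sigma_P)/K$ are ``independent enough'' for all large $q$. This should follow because $\ker(\sigma_P)$ has order $d$ coprime to $q$, so $K(\ker\sigma_P)\cap K(C[q])$ is an extension of degree dividing both $d$ and $\#\mathrm{GL}_2(\FF_q)$-related quantities, hence (for $q$ large and coprime to $d$, and after noting $\mathrm{SL}_2(\FF_q)$ is perfect for $q\ge 4$) the intersection is forced to be small; combined with transitivity on each factor this yields transitivity on the product. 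Once transitivity on pairs and the degree count $q^2-1 \mid \#(\text{orbit})$ are in hand, irreducibility of $\mathcal{D}_1$ for all large $q$ — not merely a positive density — follows, since any proper Galois-stable sub-divisor would have degree a proper divisor of $(d-1)(q^2-1)$ not divisible by $q^2-1$, contradicting transitivity.
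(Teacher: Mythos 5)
Your main line is the paper's own: for $q\nmid d$ one has $\ker(\sigma_P\circ[q])=\ker(\sigma_P)\oplus C[q]$, the divisor $D_{qP}-D_P$ is the sum of the points of this group outside $\ker(\sigma_P)$, and it splits into your $\mathcal{D}_2$ (the coset $R=\ptinf_C$, degree $q^2-1$) and $\mathcal{D}_1$ (degree $(d-1)(q^2-1)$); irreducibility of each piece is exactly transitivity of $\Gal(\Kbar/K)$ on the corresponding set of points, which comes from Serre's theorem for $C[q]\smallsetminus\{\ptinf_C\}$ together with the hypothesis on $\ker(\sigma_P)\smallsetminus\{\ptinf_C\}$. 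The one step the paper handles more cleanly is the ``independence'' you correctly identify as the crux: it applies Serre's theorem over the fixed number field $L=K(\ker\sigma_P)$, so $\Gal(\Kbar/L)$ is transitive on $C[q]\smallsetminus\{\ptinf_C\}$ for all large $q$, and transitivity on pairs $(R,S)$ follows at once. Your bounded-intersection substitute ($[K(\ker\sigma_P)\cap K(C[q]):K]$ bounded independently of $q$, so the corresponding normal subgroup of $\GL_2(\FF_q)$ contains $\mathrm{SL}_2(\FF_q)$ for large $q$, which is transitive on nonzero vectors) can be made to work, but as written it is a sketch, and perfectness of $\mathrm{SL}_2(\FF_q)$ is not quite the group-theoretic fact you need.

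Three concrete defects to repair. First, your displayed decomposition double counts: the first sum should run over $R\in\sigma_P^{-1}(\ptinf_E)\smallsetminus\{\ptinf_C\}$ (or else drop the second sum); as written the total degree is $(d+1)(q^2-1)$ rather than $d(q^2-1)$, though your subsequent definitions of $\mathcal{D}_1$ and $\mathcal{D}_2$ are the correct pieces. Second, the excursion through Lemma~\ref{lem:EDSlemmaM_P}, Lemma~\ref{lemma:deg1prime} and Chebotarev is out of place: the statement concerns $\Gal(\Kbar/K)$-orbits of points of $C(\Kbar)$, no reduction modulo primes of $K$ is needed, and, as you yourself observe, that route could only ever yield a positive density of $q$, not all large $q$; it should simply be deleted. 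Third, the closing degree argument is invalid: a union of $k$ of the Galois orbits inside $\Supp(\mathcal{D}_1)$ is a Galois-stable subdivisor whose degree \emph{is} divisible by $q^2-1$, so divisibility of orbit sizes by $q^2-1$ does not force irreducibility (nor need a subdivisor's degree divide $(d-1)(q^2-1)$). The burden rests entirely on transitivity on pairs, i.e.\ on the corrected form of your independence step --- which is what invoking Serre over $L=K(\ker\sigma_P)$ gives in one line.
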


\begin{proof}
Let~$q$ be a rational prime
with~\text{$q\nmid d$}.  Then 
\[
  D_{qP} - D_P 
  = \sigma_P^*[q]^*(\ptinf_E) - \sigma_P^*(\ptinf_E) 
  =\sum_{Q\in\ker\left(\sigma_P \circ [q]\right)\smallsetminus\ker\left(\sigma_P\right)} (Q).
\]
The decomposition of~$D_{qP} - D_P$ into a sum of irreducible divisors
over~$K$ will follow from the decomposition of~$\ker\left(\sigma_P \circ
[q]\right) = \ker\left(\sigma_{P}\right)\oplus C[q]$ into a union of
orbits under the action of~$\Gal(\Kbar/K)$.

To ease notation, we let $L=K\bigl(\ker(\s_P)\bigr)$.  As remarked at
the beginning of the proof of Lemma~\ref{lemma:deg1prime}, Serre's
theorem~\cite{MR0387283} implies that if~$q$ is sufficiently large,
then $\Gal(\Kbar/L)$ acts transitively on the set~$C[q]\smallsetminus
\{ \ptinf_C \}$.  (Note that we are assuming that~$E$ does not have~CM,
so the same holds for the isogenous elliptic curve~$C$.)  Further, we
have assumed that~$\Gal\left(\Kbar /K\right)$ acts transitively
on~$\ker\left(\sigma_P\right)\smallsetminus \{ \ptinf_C \}$.  Therefore
the set $\ker\left(\sigma_{P}\right)\oplus C[q]$ decomposes into the
following four Galois orbits:
\begin{itemize}
\item[(i)]
    $\{(\ptinf_C,\ptinf_C) \}$, 
\item[(ii)]
    $\{(R,\ptinf_C) : R\in\ker(\sigma_P), R\ne\ptinf_C\}$,
\item[(iii)]
    $\{(\ptinf_C,S) : S\in C[q], S\ne\ptinf_C\}$,
\item[(iv)]
    $\{(R,S) : 
     R\in\ker(\sigma_P), S\in C[q], R\ne\ptinf_C~\text{and}~S\ne\ptinf_C\}$.
\end{itemize}
Since~$D_{qP}-D_P$ consists of orbits~(iii) and~(iv),
which have the correct cardinalities,
this concludes
the proof.
\end{proof}

\begin{remark}
Theorem \ref{thm:isogenyEDS} gives a factorization of a division
polynomial associated to a composition of isogenies.  In the general
case, the same proof can be used to deduce for~$q$ large enough a
decomposition of~$D_{qP} - D_P$ into a sum of irreducible divisors
over~$K$ from a decomposition of~$\ker\left(\sigma_P\right)$ as a
union of orbits under the action of~$\Gal(\Kbar/K)$.  In section
\ref{sec:mag} we use similar ideas to give examples of EDS arising
from points on nonsplit elliptic curves which have only finitely many
irreducible terms.
\end{remark}

\section{Proof of Theorem~\ref{th:EDSprimitivedivisors}---Primitive Divisors in EDS}
\label{section:primitivedivisors}

In this section we prove a characteristic zero function field analogue
of the classical result~\cite{MR961918} that all but finitely many
terms in an elliptic divisibility sequence have a primitive divisor. 

Proving the existence of primitive valuations in EDS is much easier over function fields than
it is over number fields because there are no archimedean absolute
values. Over number fields, multiples~$nP$
of~$P$ will come arbitrarily close to~$\Ocal$ in the archimedean
metrics, necessitating the use of deep results from Diophantine
approximation. Over characteristic zero function fields, once some
multiple~$nP$ comes close to~$\Ocal$ in some~$v$-adic metric, no
multiple of~$P$ ever comes~$v$-adically closer to~$\Ocal$;
cf.\ Lemma~\ref{lemma:edsformalgp} below.

Inquiry into the number field analogues of the results in this section
has been motivated by the parallel question for Lucas sequences, answered
definitively by Bilu, Hanrot, and Voutier~\cite{MR1863855}. 
It is therefore natural to ask if
one can prove similar results for Lucas sequences over
function fields.  
Along these lines, Flatters and
Ward~\cite{flattersward} have shown that,
 for a polynomial ring over any field, all terms beyond the second
with indices coprime to the characteristic have a primitive valuation.

\subsection{Minimal proper regular models}

We begin with the somewhat technical
definition of a minimal proper regular model,
immediately followed by equivalent definitions
and properties
that may be more suitable for thinking about
elliptic divisibility sequences.

\begin{definition}
Let $C$ be a smooth projective
geometrically irreducible curve
over a number field~$K$,
and $E/K(C)$ an elliptic curve. 
A proper regular model for $E/K(C)$
is a pair $(\Ecal, \pi)$ consisting
of a regular scheme $\Ecal$ 
and a proper flat morphism
$\pi : \Ecal\rightarrow C$ 
with its generic fiber identified with~$E$. 

A proper regular model is $\emph{minimal}$ if,
given any other proper regular model $(\Ecal', \pi')$,
the birational map
$f:\Ecal'\dashrightarrow \Ecal$ satisfying $\pi\circ f = \pi'$
induced by the identification of the generic fibers,
is a morphism. 
\end{definition}

For any elliptic curve~$E/K(C)$, there is a unique
minimal proper regular model (\cite[IV.4.5]{MR1312368}),
and it is projective over~$K$. 
In particular, the minimal proper regular model is
an elliptic surface according to the definition of \cite[III]{MR1312368}.

If $E$ is defined over~$K$, then we can simply take $\Ecal = E\times C$,
as we claimed in Section~\ref{section:irredtermsineds}.

The following lemma shows how to determine the terms
in an EDS without computing a minimal proper regular model.
\begin{lemma}\label{lemma:minimalweierstrass}
Let $v$ be a valuation of $K(C)$, and
write $E$ in terms of a {minimal Weierstrass equation} at~$v$
{\textup (}as in \cite[VII]{MR2514094}{\textup )}. 
Then we have $$v(D_{nP}) = \max\{0,-\frac{1}{2} v(x([n]P))\}.$$
\end{lemma}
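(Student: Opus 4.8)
The plan is to compare the local intersection number $\sigma_{nP}^*(\Ocal)$ at a point $c\in C$ lying over $v$ with the valuation of $x([n]P)$, using the explicit local structure of the minimal Weierstrass model. First I would fix the valuation $v$, let $R=\Ocal_{C,c}$ be the corresponding discrete valuation ring inside $K(C)$, and work on $\Spec R$. Since the minimal proper regular model $\Ecal\to C$ is obtained locally by resolving the minimal Weierstrass model $W\to\Spec R$, and since the zero section lands in the smooth locus of $W$, the component of the special fiber meeting $\Ocal$ is the strict transform of the component of $W_c$ through which $\Ocal$ passes; in particular, restricting to a formal/\'etale neighborhood of $\Ocal$, the model $\Ecal$ and the Weierstrass model $W$ agree near the zero section. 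So it suffices to compute $\sigma_{nP}^*(\Ocal)$ on the Weierstrass model $W$.

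Next I would unwind the intersection number. Writing $Q=[n]P\in E(K(C))$, with Weierstrass coordinates $x(Q),y(Q)\in K(C)$, the point $Q$ reduces to $\Ocal$ modulo $v$ if and only if $v(x(Q))<0$ (equivalently $v(y(Q))<0$), and then $Q$ lies in the formal group $\hat E$ over $R$; cf.\ \cite[VII.2.2]{MR2514094}. If $v(x(Q))\ge 0$ then $Q$ does not meet $\Ocal$ at $c$, so $v(D_{nP})=0$, which matches $\max\{0,-\tfrac12 v(x(Q))\}$. If $v(x(Q))<0$, it is even (write $v(x(Q))=-2m$, $v(y(Q))=-3m$ for the Weierstrass minimal model; this evenness is exactly why the formula has a $\tfrac12$), and the standard parameter $t=-x/y$ on the formal group satisfies $v(t(Q))=m$. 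The intersection multiplicity $\sigma_Q^*(\Ocal)$ at $c$ is by definition the length of the local ring of the scheme-theoretic intersection, which is $v$ applied to the pullback under $\sigma_Q$ of a local equation for $\Ocal$; since $t$ is such a local equation for $\Ocal$ in the formal neighborhood, this length equals $v(t(Q))=m=-\tfrac12 v(x(Q))$. Combining the two cases gives the stated formula.

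The step I expect to be the main obstacle is the first one: justifying that, in a neighborhood of the zero section, the minimal proper regular model coincides with (the smooth locus of) the minimal Weierstrass model, so that the intersection number can be computed on the latter. The subtlety is that resolving the singularities of $W$ to obtain $\Ecal$ may insert new components into bad fibers, and one must check that the zero section is disjoint from all of these — equivalently, that $\Ocal$ meets the fiber in a component of multiplicity one lying in the smooth locus of $W$, and that this component is not blown up in the minimal resolution. This follows from Tate's algorithm and the fact that $\Ocal$ always lies in the smooth locus of the minimal Weierstrass model, but it deserves an explicit sentence or a citation (e.g.\ \cite[IV]{MR1312368} or \cite{MR1185022}). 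Once that identification is in place, the remaining computation of $v(t(Q))$ in terms of $v(x(Q))$ is the routine formal-group estimate recalled above.
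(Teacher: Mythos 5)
Your proposal is correct and follows essentially the same route as the paper: both reduce the computation to the minimal Weierstrass model by observing that the zero section lies in the smooth locus, so the minimal proper regular model agrees with (the smooth part of the identity component of) the Weierstrass model near $\Ocal$ (the paper cites \cite[IV.6.1 and IV.9.1]{MR1312368} for exactly the identification you flag as the main obstacle). Your explicit formal-group computation with $t=-x/y$ simply spells out the standard step the paper leaves implicit.
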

\begin{proof}
The minimal proper regular model $(\Ecal ,\pi )$ of $E/K(C)$ may have singular fibers. The zero section intersects fibers of $(\Ecal ,\pi )$ only at non-singular points. Since we are only interested in the pull-back
of the image of the zero section~$\Ocal$ by some other
section~$\s:C\to\Ecal$,
we only need to consider the 
identity component of the smooth part of each fiber. 
But the identity component of the smooth part of a fiber  is
given by the minimal Weierstrass equation (\cite[IV.6.1 and IV.9.1]{MR1312368}).
\end{proof}

A Weierstrass equation over $K(C)$ is minimal at all but finitely many valuations. 
For those valuations where it is not minimal, a change of coordinates makes
the Weierstrass equation minimal, which changes $v(D_{nP})$ by an amount
bounded independently of~$n$ (but
depending on the chosen Weierstrass equation).

\begin{example}
\label{example:EDSnotxdenom}
We illustrate Lemma~\ref{lemma:minimalweierstrass}.  Take~$\Ecal$ to be a
minimal proper regular model. 
Fix a minimal Weierstrass equation for~$E$ over some
affine piece of~$C$ and write~$P=(x_P,y_P)$. 
Then 
$2D_{nP}$ is close to the polar divisor of the function~$x_P\in
K(C)$, but may differ at valuations of $K(C)$ where the coefficients
are not regular or the discriminant is not invertible.

For example,
consider the curve and point
\[
  E:y^2=x^3-T^2x+1,\qquad P=(x_P,y_P)=(T,1)\in E\bigl(K(T)\bigr),
\]
over the rational function field~$K(T)$. It is minimal at all finite values for~$T$,
but to compute~$\s_P^*\Ocal$ at
$T=\infty$, we must change variables, say~$(x,y)=(T^2X,T^3Y)$. The new
equation is
\[
  E:Y^2=X^3-U^2X+U^6,
\]
with $U=T^{-1}$,
and the point~$P$ has coordinates~$(X_P,Y_P)=(U,U^3)$. 
This Weierstrass model is not smooth at $U=0$
(not even as a surface over $K$), so to find a regular model, we would have to blow up 
the singularity.
However, the discriminant $16U^6 (4-27U^6)$ is not divisible by $U^{12}$, hence
this is a minimal Weierstrass equation at $U=0$, so
so Lemma~\ref{lemma:minimalweierstrass} applies.  Since
\[ -\frac{1}{2}\ord_{U=0} X_P = -\frac{1}{2} \ord_{U=0} U = -\frac{1}{2} < 0,\]
we obtain
\[
  \ord_\infty D_P = \ord_\infty \s_P^*\Ocal = 0,\]
 in spite of having
 \[ -\frac{1}{2} \ord_\infty x_P =  - \frac{1}{2} \ord_\infty T = \frac{1}{2} > 0 \]
in the original model.
\end{example}

\subsection{Primitive valuations}

\begin{definition}
\label{def:primdiv}
Let~$K$ be a field, let~$C/K$ be a smooth projective curve, and let
$(D_n)_{n\ge1}$ be a sequence of effective divisors on~$C$. A
\emph{primitive valuation\footnote{To avoid confusion, we have changed
    terminology slightly and refer to primitive \emph{valuations},
    rather than primitive \emph{prime divisors}.  The reason that we
    do this is because the terms in our EDS are divisors on~$C$, and
    it is confusing to refer to divisors of divisors. Note that our
    ``prime divisors'' are points of~$C(\Kbar)$, which correspond to
    normalized valuations of the function field~$\Kbar(C)$.} of~$D_n$}
is a normalized valuation~$\g$ of~$\Kbar(C)$
(equivalently, a point~$\g\in C(\Kbar)$)
such that
\[
  \ord_\g(D_n)\ge1\qquad\text{and}\qquad \ord_\g(D_i)=0
  \quad\text{for all~$i<n$.}
\]
\end{definition}

We now begin the proof of Theorem~\ref{th:EDSprimitivedivisors}, which
we restate with a small amount of added notation.

\begin{theorem} 
\label{thm:primdiv}
Let~$K$ be a field of characteristic~$0$ and
let~$(D_{nP})_{n\ge1}$ be an elliptic divisibility sequence as in
Definition~$\ref{def:ellsurfeds}$.  Assume further that there is
no isomorphism $\psi:E\rightarrow E'$ over $\Kbar(C)$
with $E'$ defined over $\Kbar$ and $\psi(P)\in E'(\Kbar)$,
and that the point~$P\in
E\bigl(K(C)\bigr)$ is nontorsion.  Then there exists an~$N=N(E,P)$
such that for every~$n\ge N$, the divisor~$D_{nP}$ has a primitive
valuation.
\end{theorem}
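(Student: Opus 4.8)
The plan is to show that for all sufficiently large $n$, the divisor $D_{nP}$ must acquire a new valuation that did not appear in any $D_{mP}$ for $m<n$. The starting point is the decomposition coming from the divisibility-sequence structure: at any valuation $\g$ of $\Kbar(C)$ where $\ord_\g(D_{nP})\ge 1$, one has $[n]P$ reducing to $\ptinf$ in the formal group, and by the theory of formal groups over a discrete valuation ring of residue characteristic zero (Lemma~\ref{lemma:edsformalgp}, cited in the text), the order of vanishing is controlled: if $m$ is the smallest index with $\ord_\g(D_{mP})\ge 1$, then $m\mid n$ and $\ord_\g(D_{nP}) = \ord_\g(D_{mP})$ — there is no ``extra growth'' at already-occurring valuations, in sharp contrast to the number field case. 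First I would make this precise, extracting the identity
\[
  \deg(D_{nP}) \;=\; \sum_{m\mid n} \Bigl(\text{contribution of valuations primitive for }D_{mP}\Bigr),
\]
so that a primitive valuation for $D_{nP}$ fails to exist exactly when $\deg(D_{nP}) = \sum_{m\mid n,\ m<n}(\text{primitive contribution at level }m)$, which by the no-extra-growth property forces a relation among the degrees $\deg(D_{mP})$ for $m\mid n$.

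The second ingredient is a growth estimate: $\deg(D_{nP})$ grows quadratically in $n$, up to bounded error. This should follow from the theory of heights on elliptic surfaces — the function $n\mapsto \deg(\s_{nP}^*\Ocal)$ differs from the canonical height $\hhat(nP) = n^2\hhat(P)$ on $E/K(C)$ by a bounded amount, and $\hhat(P)>0$ precisely because $P$ is nontorsion \emph{and} not ``constant after an isomorphism,'' which is where hypothesis forbidding $\psi:E\to E'$ with $\psi(P)\in E'(\Kbar)$ enters: this is exactly the condition ensuring that $(E,P)/\Kbar(C)$ is not isotrivial-with-constant-point, equivalently $\hhat(P)\ne 0$. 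So I would record $\deg(D_{nP}) = n^2\hhat(P) + O(1)$ with the implied constant depending only on $(E,P)$, using Lemma~\ref{lemma:minimalweierstrass} to identify $\deg(D_{nP})$ with (half) the polar degree of $x([n]P)$ up to the bounded local corrections already flagged in the text.

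Combining the two: if $D_{nP}$ has no primitive valuation, then
\[
  n^2\hhat(P) + O(1) \;=\; \deg(D_{nP}) \;\le\; \sum_{m\mid n,\ m<n} \deg(D_{mP}) \;\le\; \sum_{m\mid n,\ m<n}\bigl(m^2\hhat(P) + O(1)\bigr).
\]
Now $\sum_{m\mid n,\ m<n} m^2 \le \sum_{m\le n/2} m^2$ is dominated by $(n/2)^2 \cdot (n/2) = n^3/8$ naively, which is too weak; the correct bound uses that the largest proper divisor of $n$ is $n/p$ for the smallest prime $p\mid n$, and $\sum_{m\mid n,\ m<n} m^2 \le (n/2)^2 \sum_{m\mid n} 1 = (n^2/4)\,d(n)$, and since $d(n) = O(n^\epsilon)$ this still grows faster than $n^2$. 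The honest fix — and I expect this to be the main obstacle — is that one must rule out the ``$D_{nP}=D_{mP}$'' degeneracy more carefully: the no-extra-growth lemma says the primitive contribution at level $m$ is $\deg(D_{mP}) - \sum_{\ell\mid m,\ \ell<m}(\cdots)$, and telescoping this over all $m\mid n$ gives exactly $\deg(D_{nP}) \ge \deg(D_{n'P})$ for the largest proper divisor $n'=n/p$, with equality iff no primitive valuation; then one gets $n^2\hhat(P)+O(1) \le (n/p)^2\hhat(P) + O(1) \le (n/2)^2\hhat(P)+O(1)$, i.e. $\tfrac34 n^2\hhat(P) \le O(1)$, which bounds $n$. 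So the real content is the telescoping/inclusion-exclusion bookkeeping on primitive contributions, which I would set up by defining $D_{nP}^{\mathrm{prim}}$ as the divisor supported on the primitive valuations of $D_{nP}$ and proving $D_{nP} = \sum_{m\mid n} D_{mP}^{\mathrm{prim}}$ as an identity of divisors (not just degrees), using Lemma~\ref{lemma:edsformalgp}; the degree comparison with the quadratic lower bound on $\deg(D_{nP})$ and a uniform upper bound on $\deg(D_{n'P})$ then finishes the proof, producing an explicit — if not sharp — value of $N(E,P)$.
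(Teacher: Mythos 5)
Your setup is exactly the paper's: the rigid-divisibility property (Lemma~\ref{lemma:edsformalgp}) giving the decomposition $D_{nP}=\sum_{m\mid n}D^{\mathrm{prim}}_{mP}$ into primitive parts, plus $\deg D_{nP}=n^2\hhat_E(P)+O(1)$ with $\hhat_E(P)>0$ under the stated hypotheses (Proposition~\ref{prop:non-vanishing-height}). But the final step goes wrong in two ways. First, you discard the straightforward estimate as ``too weak'' when it in fact suffices: writing each proper divisor of $n$ as $n/d$ with $d\mid n$, $d\ge 2$, one has $\sum_{m\mid n,\,m<n}m^2=n^2\sum_{d\mid n,\,d\ge2}d^{-2}\le n^2\bigl(\zeta(2)-1\bigr)<\tfrac23 n^2$, and the accumulated error terms are only $O(d(n))=O(n)$; so the inequality $n^2\hhat_E(P)+O(1)\le\sum_{m\mid n,\,m<n}\bigl(m^2\hhat_E(P)+O(1)\bigr)$ already yields $\tfrac13 n^2\hhat_E(P)\le O(n)$ and hence a bound on $n$. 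This is precisely how the paper concludes; your bound $(n/2)^2\,d(n)$ is simply lossy, not evidence that the route fails.

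Second, the ``fix'' you substitute is false. Absence of a primitive valuation for $D_{nP}$ does \emph{not} force $D_{nP}=D_{n'P}$ for the largest proper divisor $n'=n/p$: it only says $D^{\mathrm{prim}}_{nP}=0$, i.e. $D_{nP}=\sum_{m\mid n,\,m<n}D^{\mathrm{prim}}_{mP}$, and this sum can contain contributions from divisors $m$ of $n$ that do not divide $n'$. For instance, with $n=12$ and $n'=6$ one gets $D_{12P}=D_{6P}+D^{\mathrm{prim}}_{4P}$, and nothing forces $D^{\mathrm{prim}}_{4P}=0$; so your claimed equivalence ``equality iff no primitive valuation'' fails, and the inequality $n^2\hhat_E(P)+O(1)\le(n/p)^2\hhat_E(P)+O(1)$ does not follow from the hypothesis. (One could repair a comparison of this flavour by bounding $D_{nP}\le\sum_{p\mid n}D_{(n/p)P}$ using disjointness of the supports of the $D^{\mathrm{prim}}_{mP}$, but the simplest correct conclusion is the $\zeta(2)-1$ estimate above.) Everything else in your proposal --- the decomposition identity, the role of the non-isotriviality hypothesis in forcing $\hhat_E(P)>0$, and the reduction to a degree inequality --- is sound and matches the paper.
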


To ease notation, we assume for the
remainder of this section that the constant field~$K$ is algebraically
closed, and of course we retain the assumption that
$\operatorname{char}(K)=0$.  Note that there is no loss of generality in this assumption, since we have adopted the convention of considering valuations on $\Kbar(C)$.

  We start with a standard lemma 
(cf.\ \cite[Lemma~4]{MR2081943}) whose conclusion over function fields
is much stronger than the analogous statement over number fields.  The term \emph{rigid divisibility} has been used for sequences with this strong property.  For 
the convenience of the reader, we include a proof via basic properties 
of the formal group.

\begin{lemma}
\label{lemma:edsformalgp}
Let~$(D_{nP})_{n\ge1}$ be an EDS associated to an elliptic surface as
in Definition~$\ref{def:ellsurfeds}$, let~$\g\in C(K)$ be a point
appearing in the support of some divisor in the EDS, and let
\[
  m = \min \{n\ge1 :  \ord_\g D_{nP} \ge 1\}.
\]
Then
\[
  \ord_\g D_{nP} = \begin{cases}
     \ord_\g D_{mP} &\text{if~$m\mid n$,} \\
      0 &\text{if~$m\nmid n$.} \\
  \end{cases}
\]
\end{lemma}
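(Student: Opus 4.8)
\textbf{Proof strategy for Lemma~\ref{lemma:edsformalgp}.}
The plan is to work entirely in the formal group of $E$ at the valuation $\g$ and to exploit the characteristic-zero hypothesis.  Fix the point $\g\in C(K)$ and let $v=\ord_\g$ be the associated normalized valuation on $K(C)$.  By Lemma~\ref{lemma:minimalweierstrass}, passing to a Weierstrass equation for $E$ that is minimal at $v$, we have $v(D_{nP})=\max\{0,-\tfrac12 v(x([n]P))\}$, so that $\g$ appears in $D_{nP}$ precisely when $[n]P$ reduces to $\ptinf$ modulo $\g$, i.e.\ when $[n]P$ lies in the kernel of reduction.  That kernel is the formal group $\widehat{E}(\mathfrak{m}_v)$, where $\mathfrak{m}_v\subset\Ocal_v$ is the maximal ideal, and the standard parameter there is $t=-x/y$, with $v(t([n]P))=\tfrac12 v(x([n]P))$, so $v(D_{nP})=\max\{0,v(t([n]P))\}$.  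First I would record that $m=\min\{n: [n]P\in\widehat{E}(\mathfrak{m}_v)\}$ is well defined since $\g$ appears in some $D_{nP}$ by hypothesis, and reduce the lemma to the claim $v(t([n]P))=v(t([m]P))$ if $m\mid n$ and $[n]P\notin\widehat{E}(\mathfrak{m}_v)$ (so that $v(D_{nP})=0$) otherwise.

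The main step is the analysis of $v(t(Q))$ for $Q$ in the formal group.  Two classical facts about formal groups do the work.  First, $\widehat{E}(\mathfrak{m}_v)$ is a group under the formal group law, and for $Q_1,Q_2$ with $v(t(Q_i))\ge 1$ one has $t(Q_1\oplus Q_2)\equiv t(Q_1)+t(Q_2)\pmod{\text{higher order}}$, but more precisely $v(t(Q_1\oplus Q_2))\ge\min\{v(t(Q_1)),v(t(Q_2))\}$ with equality when the two values differ.  Second — and this is where $\operatorname{char}(K)=0$ is essential — multiplication by an integer $r$ on the formal group satisfies $[r](t)=rt+(\text{higher order})$, and since $r$ is a unit in $\Ocal_v$ (as the residue characteristic is $0$), one gets $v(t([r]Q))=v(t(Q))$ for every $Q\in\widehat{E}(\mathfrak{m}_v)$ and every nonzero integer $r$.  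This is the ``rigid divisibility'' phenomenon: once you are in the formal group, multiplication by $r$ does not move you deeper.  I would prove this by the usual Newton-polygon / successive-approximation argument on the power series $[r](T)\in\ZZ[[T]]$ (restricted to $E$), or simply cite it; the key point is that the leading coefficient $r$ is a $v$-adic unit.

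With these facts the lemma follows quickly.  Suppose $m\mid n$, say $n=mr$.  Since $[m]P\in\widehat{E}(\mathfrak{m}_v)$, we have $[n]P=[r]([m]P)\in\widehat{E}(\mathfrak{m}_v)$ as well, and $v(t([n]P))=v(t([r]([m]P)))=v(t([m]P))$ by the multiplication-by-$r$ fact; hence $\ord_\g D_{nP}=v(t([n]P))=v(t([m]P))=\ord_\g D_{mP}$.  Conversely, suppose $m\nmid n$; write $n=mr+s$ with $0<s<m$.  If $[n]P$ were in $\widehat{E}(\mathfrak{m}_v)$, then since $[mr]P\in\widehat{E}(\mathfrak{m}_v)$ we would get $[s]P=[n]P\ominus[mr]P\in\widehat{E}(\mathfrak{m}_v)$, contradicting the minimality of $m$; therefore $[n]P\notin\widehat{E}(\mathfrak{m}_v)$, which means $v(x([n]P))\ge 0$ and so $\ord_\g D_{nP}=\max\{0,-\tfrac12 v(x([n]P))\}=0$.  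The only mild obstacle is bookkeeping the passage between the minimal Weierstrass model at $v$ (needed for Lemma~\ref{lemma:minimalweierstrass}) and the formal-group parameter $t$, and making sure the ``equality when the valuations differ'' clause of the formal addition law is not actually needed — it is not, since in both cases above one summand lies strictly deeper in the formal group only in the degenerate sense, and the multiplication-by-$r$ identity already gives exact equality.  I expect the whole argument to take well under a page.
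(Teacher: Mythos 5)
Your proposal is correct and takes essentially the same route as the paper: the paper packages your two formal-group facts as the filtration of subgroups $E\bigl(K(C)\bigr)_{\g,r}$ whose graded quotients are isomorphic to $\mathcal{M}_\g^r/\mathcal{M}_\g^{r+1}\cong K$, hence torsion-free in characteristic zero (your statement that multiplication by a nonzero integer preserves the depth), and it treats the case $m\nmid n$ via $\gcd(m,n)$ inside the subgroup rather than your division-algorithm step. The only cosmetic slip is the sign in $v(t([n]P))$: with $t=-x/y$ one has $v(t)=-\tfrac12 v(x)$, which is what your later formula $v(D_{nP})=\max\{0,v(t([n]P))\}$ actually uses.
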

\begin{proof}
Let 
\[
  E\bigl(K(C)\bigr)_{\g,r}
  = \bigl\{ P\in E\bigl(K(C)\bigr) : \ord_\g \s_P^*\Ocal \ge r\bigr\}
   \cup \{\Ocal\}.
\]
Then~$E\bigl(K(C)\bigr)_{\g,r}$ is a subgroup of~$E\bigl(K(C)\bigr)$,
and
\[
  \ord_\g D_{nP} = \max\{r\ge0 : nP\in E\bigl(K(C)\bigr)_{\g,r}\}.
\]
This all follows from standard properties of the formal group of~$E$
over the completion~$K(C)_\g$ of~$K(C)$ at the valuation~$\ord_\g$;
see~\cite[Chapter~IV]{MR2514094}. It also follows that there is an
isomorphism of additive groups
\[
  \frac{E\bigl(K(C)_\g\bigr)_{\g,r}}{E\bigl(K(C)_\g\bigr)_{\g,r+1}}
  \cong \frac{\mathcal{M}_\g^r}{\mathcal{M}_\g^{r+1}} \cong K
  \qquad\text{for all~$r\ge1$,}
\]
where we use the notation $\mathcal{M}_\g$
to denote $K(C)_\g$'s maximal ideal.
The quotient is torsion-free since $\operatorname{char}(K) = 0$. 
\par
Let~$d=\ord_\g D_{mP}$. By assumption, we have~$d\ge1$ and
\[
  mP \in E\bigl(K(C)\bigr)_{\g,d} \smallsetminus E\bigl(K(C)\bigr)_{\g,d+1}.
\]
Since the quotient is torsion-free,
it follows that 
every multiple also satisfies
\[
  mkP \in E\bigl(K(C)\bigr)_{\g,d} \smallsetminus E\bigl(K(C)\bigr)_{\g,d+1},
\]
so~$\ord_\g D_{mkP} = d = \ord_\g D_{mP}$. 
\par
Conversely, suppose that~$\ord_\g D_{nP}\ge1$. To ease notation, 
let~$e=\ord_\g D_{nP}$. Then
\[
  nP\in E\bigl(K(C)\bigr)_{\g,e}
  \quad\text{and}\quad
  mP\in E\bigl(K(C)\bigr)_{\g,d},
\]
so the fact that~$\bigl\{E\bigl(K(C)\bigr)_{\g,r}\bigr\}_{r\ge0}$ give
a filtration of subgroups of~$E\bigl(K(C)\bigr)$ implies that
\[
  \gcd(m,n)P\in E\bigl(K(C)\bigr)_{\g,\min(d,e)}.
\]
Hence
\[
  \ord_\g D_{\gcd(m,n)P} \ge \min(d,e) \ge 1,
\]
so by the minimality of~$m$ we have~$m\le\gcd(m,n)$. Therefore~$m\mid n$,
which completes the proof of the lemma.
\end{proof}

\begin{definition}
Let~$E/K(C)$ and~$\Ecal\to C$ be as in Definition~$\ref{def:ellsurfeds}$. 
The \emph{canonical height} of a point~$P\in E\bigl(K(C)\bigr)$ is the
quantity
\[
  \hhat_E(P) = \lim_{n\to\infty} \frac{\deg \s_{nP}^*\Ocal}{n^2}.
\]
(If~$nP=\Ocal$, we set~$\s_{nP}^*\Ocal=0$.)  
\end{definition}

\begin{prop}\label{prop:non-vanishing-height}
The limit defining the canonical height exists, and the
function~$\hhat_E : E\bigl(K(C)\bigr)\to[0,\infty)$ is a quadratic form
satisfying
\begin{equation}
  \label{eqn:canhteqht}
  \hhat_E(P) = \deg\s_P^*\Ocal + O_E(1)
  \quad\text{for all~$P\in E\bigl(K(C)\bigr)$.}
\end{equation}
\textup(The~$O_E(1)$ depends on~$E/K(C)$.\textup)

Next, assume that there is no isomorphism
$\psi:E\rightarrow E'$ with $E'$ defined over $K$ and
$\psi(P)\in E'(K)$. Then we have
\[
  \hhat_E(P)=0 \quad\Longleftrightarrow\quad 
  P\in E\bigl(K(C)\bigr)_{\textup{tors}}.
\]
\end{prop}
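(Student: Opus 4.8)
The plan is to treat the three assertions in turn, with the bulk of the work going into the last equivalence. First, the existence of the limit and the fact that $\hhat_E$ is a quadratic form satisfying \eqref{eqn:canhteqht} are entirely formal: the function $P\mapsto \deg\s_P^*\Ocal$ is, up to $O_E(1)$, a Weil height attached to the divisor class of $\Ocal$ on the elliptic surface $\Ecal$, and one runs the standard Tate telescoping-sum argument. Concretely, from Lemma~\ref{lemma:minimalweierstrass} we have $2\deg\s_P^*\Ocal = \sum_v \max\{0,-v(x([1]P))\}\cdot\deg(v)$ up to a bounded error, which is the naive height of the function $x_P\in K(C)$; the duplication formula on $E$ gives $h(x([2]P)) = 4h(x_P)+O_E(1)$, and Tate's limit $\hhat_E(P)=\lim 4^{-n}h(x([2^n]P))$ exists, is independent of the model, satisfies the parallelogram law (hence is a quadratic form), and differs from $\deg\s_P^*\Ocal$ by $O_E(1)$. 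Since each $\deg\s_P^*\Ocal\ge0$, the form is nonnegative. One direction of the final equivalence is then immediate: if $P$ is torsion, the sequence $\deg\s_{nP}^*\Ocal$ is bounded (indeed eventually periodic), so $\hhat_E(P)=0$.

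The substantive direction is: $\hhat_E(P)=0 \implies P$ is torsion, under the hypothesis that $P$ is not ``isotrivial'' in the sense that no $\Kbar(C)$-isomorphism $\psi:E\to E'$ carries $P$ into $E'(\Kbar)$. Suppose $\hhat_E(P)=0$ but $P$ is nontorsion. The idea is to combine two facts. On one hand, by Lemma~\ref{lemma:edsformalgp}, $\hhat_E(P)=0$ forces the supports of the divisors $D_{nP}$ to be confined: if $\g$ ever appears in some $D_{mP}$, then it appears in $D_{nP}$ only for $m\mid n$, and always with the same multiplicity. If $\hhat_E(P)=0$, then $\deg D_{nP}$ is bounded, so only finitely many points $\g\in C(K)$ can ever occur in the support of any $D_{nP}$; say they occur first at indices $m_1,\dots,m_r$, and let $m=\operatorname{lcm}(m_1,\dots,m_r)$. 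Then for every $n$ with $\gcd(n,m)=1$ we have $D_{nP}=0$, i.e.\ $nP$ is everywhere integral with respect to $\Ocal$ — equivalently $x([n]P)$ has no poles, so $x([n]P)\in K$ (the constant field), since $C$ is projective. On the other hand, this says $[n]P$ is a $K$-point of $E$ for infinitely many $n$, and now a specialization / descent argument over the function field (this is the classical function-field Mordell--Weil or Lang--Néron statement) shows that $P$ itself must, after a $\Kbar(C)$-isomorphism $\psi:E\to E'$ with $E'/\Kbar$, land in $E'(\Kbar)$ — contradicting the hypothesis.

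I would carry out the last step as follows. Let $B = E\bigl(K(C)\bigr)/E\bigl(K(C)\bigr)_{\mathrm{tors}}$, a finitely generated free $\ZZ$-module by the Lang--Néron theorem (the function-field Mordell--Weil theorem), on which $\hhat_E$ induces a positive-definite quadratic form once we quotient by the subgroup $E_0$ of points $Q$ with $\hhat_E(Q)=0$; the content of the proposition is exactly the identification of $E_0$ with the ``constant'' points. So I would instead argue directly: the set $E_0=\{Q:\hhat_E(Q)=0\}$ is a subgroup of $E\bigl(K(C)\bigr)$ (since $\hhat_E$ is a nonnegative quadratic form), and by the boundedness argument above every $Q\in E_0$ has $x_Q$ with bounded pole divisor along the whole tower of multiples, hence — again by Lemma~\ref{lemma:edsformalgp} applied to each point of $C$, there being no primitive valuations available beyond a fixed finite stage — $Q$ has integral $x$-coordinate everywhere outside a fixed finite set $T\subset C$ independent of $Q$. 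Picking any $Q\in E_0$ of infinite order and a point $\g\notin T$, the full cyclic group $\langle Q\rangle$ reduces into the fiber $E_\g$, which is an elliptic curve over $K$; but $\langle Q\rangle$ would then inject into $E_\g(K)\otimes(\text{something})$ while remaining $\ord_\g$-integral, and chasing this through one sees $Q\in E(K)$ after the coordinate change trivializing $E$ over a suitable cover — precisely the excluded isomorphism $\psi$.

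The main obstacle I anticipate is the last paragraph: converting ``$x([n]P)\in K$ for infinitely many $n$'' into ``$P$ is isotrivial'' rigorously. The honest way is to invoke the Lang--Néron / Mordell--Weil theorem for $E/K(C)$ together with the $K(C)/K$-trace: the points of $\hhat_E$-height zero in $E\bigl(K(C)\bigr)$ are exactly the torsion points plus the image of $\operatorname{Tr}_{K(C)/K}(E)(K)$, and the hypothesis on $(E,P)$ rules out the trace contribution for $P$ (a nonzero trace point that is integral everywhere on projective $C$ is, after an isomorphism defined over $\Kbar(C)$, a constant point). I would cite this (e.g.\ the standard reference for heights on elliptic surfaces, such as Silverman's \emph{Advanced Topics} III.\S{}4--5 or Conrad's notes on the Lang--Néron theorem) rather than reprove it, and spend my effort making the reduction from $\hhat_E(P)=0$ to ``$P$ integral away from a fixed finite set, for all multiples'' clean via Lemma~\ref{lemma:edsformalgp}.
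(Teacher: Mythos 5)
Your overall plan is recognizably different from the paper's, and in outline it can be made to work. The paper handles the formal assertions (existence of the limit, quadraticity, and \eqref{eqn:canhteqht}) and the non-split case of the equivalence by citing \cite[III.4.3]{MR1312368}, and then treats only the remaining split case by hand: if $E$ has a Weierstrass equation with coefficients in $K$, that equation is minimal at every place, so Lemma~\ref{lemma:minimalweierstrass} gives $\hhat_E(P)=\lim_n n^{-2}\deg x([n]P)=\deg\s_P>0$, since the hypothesis forces $P\notin E(K)$, hence $\s_P$ nonconstant (and $P$ nontorsion). Your Tate telescoping argument for the formal part, and your plan to quote Lang--N\'eron theory in the form ``height zero $=$ torsion $+$ image of the $K(C)/K$-trace'' and then exclude the trace contribution, is a legitimate alternative at a comparable level of reliance on the literature.

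However, as written the argument has a genuine gap exactly at its crux, plus a false intermediate step. The false step: if some $\g$ already occurs in $D_P$ (rank of apparition $1$), then for $\gcd(n,m)=1$ Lemma~\ref{lemma:edsformalgp} gives $D_{nP}=D_P$, not $D_{nP}=0$; moreover ``$x([n]P)$ has no poles, hence lies in $K$'' conflates the model-dependent coordinate $x$ with the intrinsic divisor $D_{nP}$ (compare Example~\ref{example:EDSnotxdenom}), and when $E$ has no constant model, constancy of $x([n]P)$ does not mean $[n]P$ is in any sense a $K$-point. More importantly, the step you assert parenthetically --- that the hypothesis ``rules out the trace contribution'' --- is precisely the nontrivial content (it is the split case the paper proves directly), and everywhere-integrality does not deliver it. To close it you need: if the trace $B$ is nonzero then $E$ is $K(C)$-isomorphic to a constant curve $E'$ (a nontrivial twist has zero trace, by the usual cocycle argument); any homomorphism $B_{K(C)}\to E'_{K(C)}$ between constant curves is constant; and all torsion of a constant curve over $\Kbar(C)$ in characteristic zero is constant. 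Combining these, a point of canonical height zero is carried by such an isomorphism into $E'(K)$, contradicting the hypothesis. This is routine but must be said (or replaced by the paper's short degree computation); without it, the deduction from ``$P$ lies in torsion plus the trace image'' to ``the excluded isomorphism exists'' is missing.
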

\begin{proof}
A proof is given in~\cite[III.4.3]{MR1312368}, except for the final equivalence
in the case where $E$ is isomorphic to a curve over~$K$.

So assume $E$ is given by a Weierstrass equation with coefficients in~$K$. 
The point $P$ is not in~$E(K)$, so~$P$ is not a torsion point. 
The point $P$ induces a map $\sigma_P : C\rightarrow E$. 
Since $P\notin E(K)$, the map $\sigma_P$ is not constant,
i.e. $\deg (\sigma_p )$ is strictly positive. 
We show $\hhat_E(P) = \deg(\sigma_P)$.

An equation with coefficients in~$K$ is automatically a minimal Weierstrass equation for
every valuation $v$ of~$K(C)$, so Lemma~\ref{lemma:minimalweierstrass}
tells us
\[\hhat_E(P) = \lim_{n\rightarrow\infty} n^{-2}\sum_{v}\max\{0,-\frac{1}{2}v(x([n]P))\}
= \lim_{n\rightarrow\infty} n^{-2} \deg x([n]P).\]
Here $\deg x([n]P)$ is the degree of the map $x([n]P) : C\rightarrow\mathbf{P}^1$,
and we have $x([n]P) = x\circ [n] \circ \sigma_P$. 
In particular, multiplicativity of degrees
tells us $\deg x([n]P) = 2n^2 \deg \sigma_P$.
\end{proof}
\begin{remark}
It is not hard to
derive explicit upper and lower bounds for the~$O_E(1)$ in
\eqref{eqn:canhteqht} in terms of geometric invariants of the elliptic
surface~$\Ecal$; see for example~\cite{MR1035944,MR0419455}.
\end{remark}

\begin{proof}[Proof of Theorem~$\ref{thm:primdiv}$]
The proof follows the lines of the proof over number fields;
cf.\ \cite{MR961918}. The point $P$ is not a torsion point. 
From Proposition~\ref{prop:non-vanishing-height} we know
$\hhat_E(P)>0$. Suppose that~$D_{nP}$ has no primitive
valuations. Then
\begin{align*}
  D_{nP}
  &= \sum_{\g\in C} \ord_\g(D_{nP})(\g) \\
  &\le \sum_{m<n} \sum_{\g\in\Supp(D_{mP})} \ord_\g(D_{nP})(\g) 
    &&\text{by assumption,} \\
  &\le \sum_{m\mid n, m<n} \sum_{\g\in\Supp(D_{mP})} \ord_\g(D_{mP})(\g) 
    &&\text{from Lemma~\ref{lemma:edsformalgp},} \\
  &= \sum_{m\mid n, m<n} D_{mP}.
\end{align*}
Taking degrees and using properties of the canonical height yields
\begin{align*}
  n^2\hhat_E(P)
  &= \hhat_E(nP) \\*
  &= \deg D_{nP} + O(1) \\
  &\le \sum_{m\mid n, m<n} \deg D_{mP} + O(1) \\
  &= \sum_{m\mid n, m<n} \bigl(\hhat_E(mP)+O(1)\bigr) \\
  &= \sum_{m\mid n, m<n} \bigl(m^2\hhat_E(P)+O(1)\bigr) \\
  &\le n^2 \biggr(\sum_{m\mid n,m>1} \frac{1}{m^2}\biggl)\hhat_E(P)
       + O(n) \\
  &< n^2 \bigl(\zeta(2)-1\bigr)\hhat_E(P) + O(n) \\*
  &< \frac23n^2\hhat_E(P) + O(n).
\end{align*}
Since~$\hhat_E(P)>0$, this gives an upper bound for~$n$.
\end{proof}

\begin{remark}
It is an interesting question to give an explicit upper bound for the
value of~$N(E,P)$ in Theorem~\ref{thm:primdiv}, i.e., for the
largest value of~$n$ such~$D_{nP}$ has no primitive valuation. 
Using the function field version of Lang's height lower bound
conjecture, proven in~\cite{MR948108}, and standard explicit estimates
for the difference between the Weil height and the canonical height,
it may be possible to prove that for EDS associated to a minimal
model, the bound~$N(E,P)$ may be chosen to depend only
on the genus of the function field~$K(C)$, independent of~$E$
and~$P$. However, the details are sufficiently intricate that we will
leave the argument for a subsequent note. 
(See~\cite{ingramsilverman06} for a weaker result over number
fields, conditional on the validity of Lang's height lower bound
conjecture for number fields.)
\end{remark}

\begin{remark}\label{rem:maarten}
Theorem~\ref{thm:primdiv} ensures, in the non-split case, that all but
finitely many terms in an EDS over a function field have a primitive
valuation.  If the base field~$K$ is a number field, then these
valuations correspond to divisors defined over~$K$, and thus
are attached to a Galois orbit of points.  It is
natural to ask about the degrees of these primitive valuations.  Note
that if~$\gamma\in C(\Kbar)$ is in the support of one of these
primitive valuations, then~$P$ specializes to a torsion point on
the fiber above~$\gamma$, and so it follows
from~\cite[Theorem~III.11.4]{MR1312368} (or elementary estimates if
the fiber is singular) that the height of~$\gamma$ is bounded by a
quantity depending only on~$E$.  One immediately obtains
an~$O(\log{n})$-lower bound on the degree of the smallest primitive
valuation of~$D_{nP}$.  Maarten Derickx has pointed out to the authors
that one can prove a weaker, but more uniform, lower bound using deep
results of Merel, Oesterl\'{e}, and Parent (see~\cite{MR1665681} and
the addendum to~\cite{MR1321648}).  In particular, one obtains a lower
bound which is logarithmic in the largest prime divisor of~$n$, with
constants depending only on the underlying number field, independent
of~$E$.
\end{remark}


\section{Magnification and Elliptic Divisibility Sequences}
\label{sec:mag}

As usual,  let~$C/K$ be a smooth projective curve defined over a
field~$K$ of characteristic zero and consider an elliptic
divisibility sequence $\left(D_{nP}\right)_{n\ge 1}$ arising as in
Definition~\ref{def:ellsurfeds} from a~$K(C)$-point~$P$ on an elliptic
curve~$E/K(C)$. 
Suppose that $E$ and $P$ satisfy the hypotheses
of Theorem~\ref{thm:primdiv}. 
That theorem then says that there exists a
sequence~$\gamma_1,\gamma_2,\gamma_3,\ldots$ of closed points of~$C$
such that
\[
  \ord_{\gamma_{n}}(D_{mP}) > 0 \quad\Longleftrightarrow\quad  n\mid m.
\]
Theorem~\ref{th:mainEDSresult} provides examples of elliptic
divisibility sequences such that for infinitely many indices~$n$, the
support of~$D_{nP}$ is exactly the~$\Gal(\Kbar/K)$-orbit of
the single point~$\gamma_{n}$. 

However, the example of Lucas sequences with finitely many irreducible
terms~\eqref{eqn:genfac} suggests that the same should be true for
some EDS.  In this section we describe properties of EDS that ensure
that for all sufficiently large~$n$, the divisor~$D_{nP}$ contains at
least two distinct Galois orbits. 

\begin{definition}
\label{def:magnification}
An elliptic divisibility sequence~$\left( D_{nP}\right)_{n\ge 1}$ attached to
an elliptic curve~$E/K(C)$ is said to be \emph{magnified} over $K(C)$ if there is an elliptic curve~$E'/K(C)$, 
an isogeny~$\tau:E'\longrightarrow E$ defined over~$K(C)$ that is not an isomorphism, and a point~$P'~\in~E'\bigl( K(C)\bigr)$ such that~$P = \tau(P')$.
\end{definition}

The following result is a variant of \cite[Theorem~1.5]{MR2429645}. 

\begin{theorem}
\label{thm:two-primitive-divisors}
Assume that~$E$ and $P$ satisfy the hypotheses
of Theorem~\ref{thm:primdiv}, and that~$\left( D_{nP}\right)_{n\ge 1}$ is
magnified over $K(C)$.  Then there is a constant~$M = M (E,P)$ such that for every
index~$n>M$, the support of the divisor~$D_{nP}$ includes at least two valuations that are not~$\Gal\left(\Kbar/K\right)$-conjugates
of one another.
\end{theorem}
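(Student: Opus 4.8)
The plan is to combine the primitive-valuation theorem (Theorem~\ref{thm:primdiv}) applied to \emph{two} elliptic divisibility sequences: the one attached to $P$ on $E$, and the one attached to $P'$ on $E'$, where $\tau : E' \to E$ is the magnifying isogeny and $P = \tau(P')$. The key geometric input is a comparison of the two sequences $D_{nP}$ and $D_{nP'}$. Using Lemma~\ref{lemma:minimalweierstrass} (or directly, properties of N\'eron models and the behavior of $\tau$ on formal groups), one shows that at every valuation $\gamma$ of $K(C)$ one has $\ord_\gamma(D_{nP'}) \le \ord_\gamma(D_{nP})$ up to a bounded error, because $\tau(P') = P$ means that whenever $nP'$ lies in the kernel of reduction, so does $nP$. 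Hence, up to a bounded divisor, $D_{nP} \ge D_{nP'}$, i.e. $\mathrm{Supp}(D_{nP'}) \subseteq \mathrm{Supp}(D_{nP})$ outside a finite set of valuations independent of $n$.

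Next I would invoke Theorem~\ref{thm:primdiv} for the sequence $(D_{nP'})$. One must first check that $E', P'$ satisfy the hypotheses of that theorem: $P'$ is nontorsion (since $P = \tau(P')$ is nontorsion and $\tau$ has finite kernel), and there is no isomorphism $\psi : E' \to E''$ over $\Kbar(C)$ with $E''/\Kbar$ and $\psi(P') \in E''(\Kbar)$ — otherwise, composing with $\tau$ (which is an isogeny, not necessarily an isomorphism) one would still obtain, after taking a suitable model, a contradiction with the corresponding hypothesis on $(E,P)$; this requires a small argument using that isogenous curves are simultaneously isotrivial or not. Granting this, Theorem~\ref{thm:primdiv} gives an $N'$ such that for all $n \ge N'$ the divisor $D_{nP'}$ has a primitive valuation $\gamma'_n$, meaning $\ord_{\gamma'_n}(D_{nP'}) \ge 1$ but $\ord_{\gamma'_n}(D_{mP'}) = 0$ for all $m < n$.

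The main point is then to produce, for large $n$, a \emph{second} primitive-type valuation in $\mathrm{Supp}(D_{nP})$ not Galois-conjugate to the first. On one hand, the primitive valuation $\gamma_n$ of $D_{nP}$ from Theorem~\ref{thm:primdiv} lies in $\mathrm{Supp}(D_{nP})$ and in no earlier term. On the other hand, the inclusion $\mathrm{Supp}(D_{nP'}) \subseteq \mathrm{Supp}(D_{nP})$ shows $\gamma'_n \in \mathrm{Supp}(D_{nP})$ as well (for $n$ large enough to avoid the finite exceptional set). I claim $\gamma_n$ and $\gamma'_n$ cannot be $\Gal(\Kbar/K)$-conjugate for $n$ large: indeed, $\gamma_n$ is a primitive valuation of $(D_{mP})_m$, so $P$ reduces to a torsion point of exact order $n$ at $\gamma_n$ (using Lemma~\ref{lemma:edsformalgp} and that $n$ is the first index with $\ord_{\gamma_n} D_{mP} > 0$), whereas at $\gamma'_n$ the point $P' = \tau^{-1}$-preimage reduces to a torsion point whose order relative to $P$ is $n$, but $P = \tau(P')$ reduces to a torsion point of order \emph{strictly dividing} $n$ — more precisely, the order of $P$ at $\gamma'_n$ divides $n/\ell$ where $\ell \mid \deg\tau$ — so $\ord_{\gamma'_n}(D_{mP}) > 0$ already for some $m < n$. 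Since Galois-conjugate valuations have the same reduction behavior for the sequence $(D_{mP})$, $\gamma_n$ (primitive for $(D_{mP})$) and $\gamma'_n$ (not primitive for $(D_{mP})$) lie in different Galois orbits. Taking $M = M(E,P)$ to be the maximum of $N(E,P)$, $N(E',P')$, and a bound absorbing the finitely many exceptional valuations and the finitely many $n$ for which the order-divisibility argument degenerates (e.g. $n$ with all prime factors dividing $\deg\tau$), we are done.

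The hardest step, I expect, is the last one: making the ``different Galois orbit'' argument fully rigorous requires carefully tracking \emph{which} term of $(D_{mP})$ each candidate valuation first appears in, and ruling out the degenerate possibility that $\gamma'_n$ happens to be primitive for $(D_{mP})$ after all. This forces one to analyze how the reduction order of $P$ and of $P'$ at a common valuation are related through the isogeny $\tau$, using the structure of $\tau$ on formal groups and kernels; one must show that $\ell := \min\{k \ge 1 : \ker(\tau) \text{ meets the order-}k\text{ reduction locus}\}$ is $>1$ for infinitely many—indeed cofinitely many—relevant $n$, or else argue directly that for $n$ coprime to $\deg\tau$ the reduction of $P$ at $\gamma'_n$ has order exactly $n/\gcd(n,\#\ker\tau)$, which is $<n$. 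The rest — the inclusion of supports and the verification of hypotheses for $(E',P')$ — is routine given Lemmas~\ref{lemma:minimalweierstrass} and~\ref{lemma:edsformalgp} and Proposition~\ref{prop:non-vanishing-height}.
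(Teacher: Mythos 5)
Your setup (effectivity $D_{nP}\ge D_{nP'}$ via the isogeny, verifying that $(E',P')$ satisfies the hypotheses, and applying Theorem~\ref{thm:primdiv} to get a primitive valuation $\g_n'$ of $D_{nP'}$ lying in $\Supp(D_{nP})$) agrees with the paper. The gap is in the step you yourself flag as hardest: you claim that at $\g_n'$ the point $P=\tau(P')$ already meets the zero section at some index $m<n$, so that $\g_n'$ is not primitive for $(D_{mP})_m$ and hence cannot be conjugate to the primitive valuation $\g_n$ of $D_{nP}$. This is false in general, and your own fallback computation points the wrong way: if $\gcd(n,\deg\tau)=1$, then the reduction of $P'$ at $\g_n'$ generates a group of order $n$ prime to $\#\ker\tau$, so $\tau$ is injective on that group and the reduction of $P$ has exact order $n$ as well; your formula ``order $n/\gcd(n,\#\ker\tau)$, which is $<n$'' evaluates to $n$, not to something smaller. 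Thus for cofinitely many $n$ (in particular all $n$ prime to $\deg\tau$) the valuation $\g_n'$ is typically itself a primitive valuation of $(D_{mP})_m$, and nothing in your argument prevents $\g_n$ and $\g_n'$ from being equal or Galois-conjugate. So the proposal does not produce two distinct orbits.

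The paper closes this hole differently: it does not try to compare $\g_n$ with $\g_n'$ at all, but shows directly that for large $n$ there is a valuation in $\Supp(D_{nP})$ lying \emph{outside} $\Supp(D_{nP'})$; such a valuation cannot be conjugate to $\g_n'$, since $D_{nP'}$ is defined over $K$, so every conjugate of $\g_n'$ is again a primitive valuation of $D_{nP'}$ and in particular lies in its support. The existence of such a valuation is forced by a height count: if every $\g\in\Supp(D_{nP})$ satisfied $\ord_\g(D_{nP'})>0$, then combining the effectivity inequalities for $\tau$ and its dual with the rigidity statement of Lemma~\ref{lemma:edsformalgp} gives $\ord_\g(D_{nP})=\ord_\g(D_{nP'})$ for all $\g$, hence $D_{nP}=D_{nP'}$; taking degrees and using Proposition~\ref{prop:non-vanishing-height} yields $n^2 d\,\hhat(P')\le n^2\hhat(P')+O(1)$ with $d=\deg\tau\ge 2$, which bounds $n$. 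Some quantitative argument of this kind (comparing the sizes of $D_{nP}$ and $D_{nP'}$) is what your proof is missing; the local order-of-reduction analysis alone cannot separate the two orbits. Incidentally, the verification of the hypotheses for $(E',P')$ is simpler than you suggest: the proof of Theorem~\ref{thm:primdiv} only needs $\hhat(P')>0$, which follows from $\hhat(P)=\deg(\tau)\,\hhat(P')>0$.
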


\begin{proof}
Let~$\tau : E'\longrightarrow E$ and~$P'\in E'\bigl(K(C)\bigr)$ be
defined as in Definition~\ref{def:magnification}, and let
$\left(D_{nP'}\right)_{n\ge 1}$ be the elliptic divisibility sequence
associated to~$P'$. 

The isogeny $\tau$ induces a morphism $\tau$ from the N\'eron model of $E'$ to the N\'eron model of $E$. The zero section intersects fibers of the minimal proper regular model only at non-singular points, so we know from the relationship between the minimal proper regular model and the N\'eron model \cite[IV.6.1 and IV.9.1]{MR1312368} that, for any index~$n$, the divisor
\begin{equation}\label{equations-effectivity-DnP-DnP'}
  D_{nP} -D_{nP'} = \sigma_{nP}^*(\Ocal_{E}) - \sigma_{nP'}^*(\Ocal_{E'})
  = \sigma_{nP'}^*\bigl(\tau^*(\Ocal_{E})-\Ocal_{E'}\bigr)
\end{equation}
is effective (cf.\ \cite[Lemma 2.13]{MR2377368} for a complete proof of the analogous result for elliptic divisibility sequences defined over number fields).

We required the hypotheses of Theorem~\ref{thm:primdiv}
only for $(E,P)$, but the proof of that
theorem holds for $(E',P')$ as well.
Indeed, the hypotheses are used in the proof
of Theorem~\ref{thm:primdiv} only for showing~$\hhat(P)>0$,
which implies $\hhat(P')>0$ via~$\tau$.
In particular, there is a bound
$N(E',P')$ such that for every~$n~>~N(E',P')$, the divisor~$D_{nP'}$
has a primitive valuation, say~$\gamma_n'~\in~C(\Kbar)$. Then~$\gamma_n'$ occurs also in the support of~$D_{nP}$.
Further, since every divisor \text{$D_{mP'}\in\Div(C)$}
is defined over~$K$, we see that every
$\Gal\left(\Kbar/K\right)$-conjugate of a primitive valuation of
$D_{nP'}$ is again a primitive valuation of~$D_{nP'}$.
Hence
Theorem~\ref{thm:two-primitive-divisors} is proven once we show
that for all sufficiently large~$n$, the support of~$D_{nP}$ contains a valuation~$\gamma_{n}\in C(\Kbar)$ with \text{$\ord_{\gamma_{n}}(D_{nP'}) =
0$}. We do this by modifying the proof of Theorem~\ref{thm:primdiv}.

Suppose that~$n$ is an index such that
$\ord_{\gamma}(D_{nP'}) > 0$ for \emph{every} valuation~$\gamma$ belonging to the support of~$D_{nP}$.  We will show that~$n$ is bounded.
Let~$d~=~\deg(\tau)\ge2$.
Applying \eqref{equations-effectivity-DnP-DnP'} and its analogue
for the dual of~$\tau$, we get
\[
  \ord_{\gamma}(D_{ndP'})\ge\ord_{\gamma}(D_{nP})\ge\ord_{\gamma}(D_{nP'})
\]
for every valuation~$\gamma$.
If $\gamma$ belongs to the support of~$D_{nP}$, then
by assumption we also have $\ord_{\gamma}(D_{nP'})>0$,
so Lemma~\ref{lemma:edsformalgp} tells us that the outermost orders are equal.
In particular, we get
\[
  \ord_{\gamma}(D_{nP})  =\ord_{\gamma}(D_{nP'}),
\]
which is also true if $\gamma$ does not belong to the
support of~$D_{nP}$.
It follows that $D_{nP} = D_{nP'}$. Taking degrees, this implies
\begin{align*}
  n^{2}d\hhat (P') = \hhat (nP) &\le \deg D_{nP} + O(1) = \deg D_{nP'} + O(1)\\
  &\le \hhat (nP') + O(1)
  \le  n^{2}\hhat (P') + O(1).
\end{align*}
In particular, the index~$n$ is bounded since $d>1$.
\end{proof}

\begin{remark}
The proof of Theorem~\ref{thm:two-primitive-divisors} is based on the
effectiveness of the divisor~$D_{nP} -D_{nP'}$. 
Corrales-Rodrig\'{a}\~{n}ez and
Schoof~\cite{Corrales-Rodriganez-Schoof} proved that, in number
fields, the analog to the magnification condition is the only way to
construct a pair of elliptic divisibility sequences
$\left(B_{n}\right)_{n\ge 1}$ and~$\left(D_{n}\right)_{n\ge 1}$ such
that~$B_{n}\mid D_{n}$ for every~$n\ge 1$. 
\end{remark}

\begin{remark}
Theorem~\ref{thm:two-primitive-divisors} implies that
Theorem~\ref{th:mainEDSresult} cannot be generalized to magnified
points.
\end{remark}


\section{Examples}
\label{sec:ex}

In this section we provide examples of Lucas sequences and elliptic
divisibility sequences over function fields that illustrate some of
our results.  Computations were performed with Sage Mathematics Software \cite{Sage}.

\subsection{Lucas sequences over~$K[T]$}
We provide some examples illustrating the two cases of
Remark~\ref{rem:lucaskt}.  If~$f(T)\in K[T]$ has prime degree and
$f(T)-1$ is irreducible, then the Lucas sequence
\[
  L_n=\frac{f(T)^n-1}{f(T)-1}
\]
is amenable.  Lemma \ref{lem:multlemma} then tells us that~$L_q$ is
irreducible for all sufficiently large~$q$ such that~$f(T)$ is
irreducible modulo some~$\fq \mid q$.  Looking at the proof of Lemma
\ref{lem:multlemma}, we see that the following notion of
``sufficiently large'' suffices.
\begin{enumerate}
  \item~$f(T)$ has~$\fq$-integral coefficients, and leading
    coefficient a~$\fq$-unit.
  \item~$\QQ(\zeta_q)$ is linearly disjoint from~$K$.
\end{enumerate}
For example,~$f(T) = T^2 + 1 \in \QQ[T]$ is irreducible modulo all
primes \text{$q\equiv 3\MOD{4}$}.  Hence in the Lucas sequence
\[
  L_n=\frac{(T^2+2)^n-1}{T^2+1},
\] 
the term~$L_q$ is irreducible for all primes~\text{$q \equiv 3
  \MOD{4}$}.  In fact, we checked that~$L_q$ is irreducible for all
primes $q\le 1009$, which suggests that~$L_q$ may be irreducible
for all primes. The first few terms, in factored form, are:
{\small\begin{align*}
  L_1 &= 1, \\
  L_2 &= T^2 + 3,\\
  L_3 &= T^4 + 5T^2 + 7, \\
  L_4 &= (T^2 + 3)(T^4 + 4T^2 + 5), \\
  L_5 &= T^8 + 9T^6 +31T^4 + 49T^2 + 31, \\
  L_6 &= (T^2 + 3)(T^4 + 3T^2 + 3)(T^4 + 5T^2 + 7), \\
  L_7 &= T^{12} + 13T^{10} + 71T^8 + 209T^6 + 351T^4 + 321T^2 + 127, \\
  L_8 &= (T^2 +3)(T^4 + 4T^2 + 5)(T^8 + 8T^6 + 24T^4 + 32T^2 + 17), \\
  L_9 &= (T^4 + 5T^2+ 7)(T^{12} + 12T^{10} + 60T^8 + 161T^6 + 246T^4 + 204T^2 + 73), \\
  L_{10} &= (T^2 + 3)(T^8 + 7T^6 + 19T^4 + 23T^2 + 11)(T^8 + 9T^6 + 31T^4 +49T^2 + 31).
\end{align*}}

In general, the Chebotarev density theorem used in
Lemma~\ref{lem:EDSchebotarev} provides us with a specific value for
the lower density.  In the case that the extension of~$K$ generated by
a root of~$f(T)$ is Galois of prime degree~$p$, the lower density
provided by our proof is~$(p-1)/p$. 

For a concrete example of the second type of Lucas sequence described
in Remark \ref{rem:lucaskt}, we consider
\[
  L_n=\frac{f^n - g^n}{f - g}\in \ZZ[T],
\]
where
\[
  f = T+S,\quad g =T-S,\quad S^2=T^3-2.
\]
The first few terms of this sequence are
{\small\begin{align*}
  L_1 &= 1, \\
  L_2 & = 2T, \\
  L_3 &= (T+1)(T^2+2T-2), \\
  L_4 &= 4T(T-1)(T^2+2T+2),\\
  L_5 &= T^6+10T^5+5T^4-4T^3-20T^2+4,\\
  L_6 &= 2T(T+1)(T^2+2T-2)(3T^3+T^2-6),\\ 
  L_7 &= T^9+21T^8+35T^7+T^6-84T^5-70T^4+12T^3+84T^2-8. 
\end{align*}}
We have checked that~$L_q$ is irreducible for all primes~$5\leq q\leq
1009$, but we note that~$L_q$ is reducible for~$q=3$.  It seems likely
that all but finitely many prime-indexed terms of this sequence are
irreducible, but this sequence illustrates the fact that amenability
does not imply that \emph{every} prime-indexed term is irreducible.

\subsection{Split elliptic divisibility sequences}

Let
\[
  E: y^2 +a_1 xy + a_3 y = x^3 + a_2x^2 + a_4 x + a_6
\]
be an elliptic curve defined over~$K$.  Then for any curve~$C/K$, we
may consider~$E$ as a split elliptic curve over the function
field~$K(C)$.

We now take~$C = E$ and consider~$E$ as an elliptic curve over its own
function field~$K(E)=K(x,y)$.  Then~$D_{nP}$ for~$P = (x,y)$ is
essentially the divisor of the division polynomial $\Psi_n(x,y)$.
This constitutes a universal example in the following sense.
Suppose~$C$ is a curve defined over~$K$ with a rational map~$C
\rightarrow E$.  Then, considering~$E$ as a curve over~$K(E)$, pulling
back by this map gives~$E$ as a curve over~$K(C)$:
\[
  \xymatrix{
    E_{/K(C)} \ar[r] \ar[d] & E_{/K(E)} \ar[d] \\
    \Spec K(C) \ar[r] & \Spec K(E) \\
  }
\]
Pulling back the point~$P = (x,y)$ across the top gives rise to a
$K(C)$-point on~$E$.  Conversely, any~$K(C)$-point on~$E$ gives rise
to a map~$C \rightarrow E$.  In particular, the only~$K(T)$-points of
$E$ are its~$K$-points, since the only maps~$\PP^1 \rightarrow E$ are
constant.

To illustrate this construction, suppose that
\[
  E: y^2 = x^3 - 7x + 6.
\]
Consider the curve
\[
  C: v^2 = u^3 - 7(u^3 + 2)^4u + 6(u^3 +2)^6
\]
and the map 
\[
  C \longrightarrow E,\qquad (u,v) \longmapsto (u/(u^3+2)^2,v/(u^3+2)^3).  
\]
Then
\[
  P = (u/(u^3+2)^2, v/(u^3+2)^3) \in E\bigl(K(C)\bigr),
\]
and the associated sequence of~$D_{nP}$ (in factored form, where we
identify~$D_Q$ with a function on~$C$ whose divisor is
$D_Q-\deg(D_Q)(\Ocal)$) begins
{\small\begin{align*}
  D_P &= (u^3 + 2), \\
  D_{2P} &= 2y(u^3 + 2), \\
  D_{3P} &= (u^3 + 2) (72u^{22} + 1008u^{19} + 5964u^{16} + 19320u^{13} -
49u^{12} \\& + 36960u^{10} - 392u^9 + 42u^8 + 41676u^7 - 1176u^6 + 168u^5
\\ & + 25551u^4 - 1568u^3 + 168u^2 + 6528u - 784),\\
  D_{4P} &= 4y(u^3 + 2)(288u^{42} + 8064u^{39} + 104160u^{36} + 822528u^{33} \\& +
4435592u^{30} + 504u^{28} + 17275648u^{27} + 9072u^{25} \\& + 50100936u^{24} +
71988u^{22} + 109870016u^{21} + 330456u^{19} \\& + 183006341u^{18} + 966672u^{16}
+ 230282052u^{15} + 441u^{14}\\& + 1867572u^{13} + 215342212u^{12} + 3528u^{11}
+ 2380539u^{10}\\& + 144988252u^9 + 10584u^8 + 1927548u^7 + 66365219u^6
\\&+ 14112u^5 + 897708u^4 + 18454080u^3 + 7056u^2 \\&+ 182784u +
2345536).
\end{align*}}
We also computed~$D_{5P} - D_P$, which has degree~$84$ and is
irreducible (as a polynomial in $u$).

\subsection{An isogeny}
As an example to which Theorem \ref{thm:isogenyEDS} applies, consider
the elliptic curves
\begin{align*}
  E&: y^2 + y = x^3 - x^2 - 10x - 20, \\
  C&: v^2 + v = u^3 - u^2 - 7820u - 263580.
\end{align*}
There is an isogeny~$\sigma_P: C \rightarrow E$ of degree~$5$ such 
that the divisor
\[
  \sum_{Q \in \ker(\sigma_P)} (Q) - (\Ocal)
\]
is irreducible over~$\QQ$.  The map~$\sigma_P$ gives a point~$P$ on
$E$ as a curve over~$K(C)$.  We find that, in factored form,
{\small\begin{align*}
   D_P &= (5u^2 + 505u + 12751) \\
  D_{3P} &= (5u^2 + 505u + 12751)(3u^4 - 4u^3 - 46920u^2 - 3162957u \\&
   - 60098081) (u^{16} + 808u^{15}  + 307664u^{14} + 73114536u^{13} \\ & +
  12109319702u^{12}  + 1478712412670u^{11} + 137408300375962u^{10} \\& +
  9888567316290696u^9 + 555597255218203792u^8 \\& + 24384290372532564144u^7
  + 830287549319036362345u^6 \\& + 21602949256698317741635u^5 +
  418237794866116560977925u^4 \\& + 5763041398838852610101023u^3 +
  52312834246514003927525299u^2 \\& + 268864495959470526718080718u +
  530677345945019287998317531).
\end{align*}}
The factor
\[
  3u^4 - 4u^3 - 46920u^2 - 3162957u  - 60098081
\]
is the third division polynomial for~$C$, as expected from the proof
Theorem~\ref{thm:isogenyEDS}.


\end{document}